\newtheorem{lemma}{Lemma}
\newtheorem{thm}{Theorem}
\theoremstyle{definition}
\theoremstyle{remark}
\newtheorem*{rmk}{Remark}
\newtheorem*{rmks}{Remarks}
\newcommand{\ve}{\varepsilon}
\begin{document}
\title{Difference sets and the metric theory of small gaps}
\author[C.\ Aistleitner, D.\ El-Baz, M.\ Munsch]{Christoph Aistleitner, Daniel El-Baz, Marc Munsch}

\subjclass[2010]{Primary 11B05, 11J71, 11J83, 11M06; Secondary 11K06, 11K38}
 \keywords{Metric diophantine approximation, Duffin--Schaeffer conjecture, minimal gap of sequences mod $1$, difference set, rectangular billiard.}

\newcommand{\mods}[1]{\,(\mathrm{mod}\,{#1})}

\begin{abstract}
Let $(a_n)_{n \geq 1}$ be a sequence of distinct positive integers. In a recent paper Rudnick established asymptotic upper bounds for the mini\-mal gaps of $\{a_n \alpha ~\textup{mod}~ 1,  ~1 \leq n \leq N\}$ as $N \to \infty$, valid for Lebesgue-almost all $\alpha$ and formulated in terms of the additive energy of $\{a_1, \dots, a_N\}$. In the present paper we argue that the metric theory of minimal gaps of such sequences is not controlled by the additive energy, but rather by the cardinality of the difference set of $\{a_1, \dots, a_N\}$.  We establish a (complicated) sharp convergence/divergence test for the typical asymptotic order of the minimal gap, and prove (slightly weaker) general upper and lower bounds which allow for a direct application. A major input for these results comes from the recent proof of the Duffin--Schaeffer conjecture by Koukoulopoulos and Maynard. We show that our methods give very precise results for slowly growing sequences whose difference set has relatively high density, such as the primes or the squares. Furthermore, we improve a metric result of Blomer, Bourgain, Rudnick and Radziwi{\l}{\l} on the order of the minimal gap in the eigenvalue spectrum of a rectangular billiard. 
\end{abstract}

\maketitle

\section{Introduction and statement of results}

Let $(a_n)_{n \geq 1}$ be a sequence of distinct positive integers. Many authors have studied the distribution of sequences of the form $(a_n \alpha)_{n \geq 1}$ mod 1, either in the setup when $\alpha$ is a fixed real number (usually irrational) or in the metric setup where $\alpha$ is randomly chosen from the unit interval, equipped with Borel sets and Lebesgue measure. Particular attention has been given to the case when $(a_n)_{n \geq 1}$ is exponentially increasing \cite{RZ1999, RZ2002}, is generated by the values at integers of an (integer-valued) polynomial \cite{rs, rsz}, or is the sequence of primes \cite{mato,Walker}. The ``global'' distributional behavior of such sequences (mod 1) is described by uniform distribution theory and quantified by discrepancy theory; see for example \cite{dts,kn} for a general exposition. The ``local'' distribution properties can be described in terms of pair correlation, higher correlations, and neighbor gap statistics. This viewpoint has one of its motivations in quantum physics; see \cite{mark} for a survey in mathematical language. For more context and examples we refer to \cite{rs,rsz}. In the present paper we are concerned with the minimal gap statistic for sequences $(a_n \alpha)_{n \geq 1}$. For $\alpha \in [0,1]$, we define
\begin{equation} \label{delta_def}
\delta_{\min}^{\alpha} (N) = \min \Big\{ \| (a_m - a_n)  \alpha\|: ~1 \leq m, n \leq N,~m \neq n \Big\},
\end{equation}
where $\| \cdot \|$ denotes the distance on the torus.  Let $A_N$ denote the finite truncations $(a_1, \dots, a_N)$ of the sequence $(a_n)_{n \geq 1}$. Let $E_N$ denote the additive energy of $A_N$, that is, the number of solutions $(n_1,n_2,n_3,n_4)$ with $n_i \leq N,~1 \leq i \leq 4,$ to the equation $a_{n_1} - a_{n_2} = a_{n_3} - a_{n_4}$. Pursuing an idea that originated in the context of pair correlation problems (see \cite{all,bw}), Rudnick \cite{rud} proved the following. Let $\ve > 0$. Then for almost all $\alpha$  and all sufficiently large $N$
\begin{equation} \label{rud}
\delta_{\min}^{\alpha} (N) \leq \frac{E_N}{N^{4 - \ve}},
\end{equation}
while in the other direction, 
\begin{equation} \label{rud_2}
\delta_{\min}^{\alpha} (N) \geq \frac{1}{N^{2 + \ve}}
\end{equation}
for almost all $\alpha$ and all $N \geq N_0(\alpha)$. It is easy to see that for every sequence $E_N$ lies in the range $N^2 \leq E_N \leq N^3$. Thus \eqref{rud} gives a non-trivial result whenever $E_N \ll N^{3 - \ve'}$ for some $\ve' > 0$, i.e.\ whenever there is a power saving in the bound for the additive energy. If the additive energy is of smallest possible order, say $E_N \ll N^{2 + o(1)}$, then \eqref{rud} reads $\delta_{\min}^{\alpha} (N) \ll \frac{1}{N^{2 - \ve}}$, which in view of \eqref{rud_2} is optimal up to terms of order $N^{o(1)}$. As noted in \cite{rud}, this yields strong results for sequences where the additive energy is small, for example in the case when $(a_n)_{n \geq 1}$ is exponentially increasing or when $a_n = n^2,~n \geq 1$.

In the setting of the metric theory of pair correlations, the additive energy is a very appropriate tool to establish pseudo-random (so-called Poissonian) behavior, even if it is known that the additive energy alone is not sufficient for a full understanding of that theory \cite{alt}. By contrast, we argue in this paper that the metric theory of minimal gaps is better characterized in terms of the cardinality of the difference set, rather than by the additive energy. Heuristically this is quite reasonable: while in the pair correlation setting \emph{all} pairs $(m,n)$ with a certain difference $a_m - a_n$ contribute to the overall statistics, and many solutions of $a_{n_1} - a_{n_2} = a_{n_3} - a_{n_4}$ lead to an overshoot of small gaps, in the setup of minimal gaps every specific difference $a_m - a_n$ can contribute only \emph{once} to creating a particularly small gap $(a_m - a_n) \alpha$ mod 1. We write 
$$
A_N - A_N = \left\{a - b:~a,b \in A_N \right\} \qquad \text{and} \qquad C_N = \# (A_N - A_N).
$$ 
The claim is then that the order of $\delta_{\min}^{\alpha} (N)$ for almost all $\alpha$ is essentially controlled by the asymptotic order of $C_N$. Note that $C_N$ and $E_N$ are of course related. If $\textup{rep}_N(u)$ denotes the number of possible representations of an integer $u$ in the form $u=a_m - a_n$, subject to $m,n \leq N$, then
$$
C_N = \sum_u \mathbbm{1} (\textup{rep}_N (u) > 0) \qquad \text{and} \qquad E_N = \sum_u \textup{rep}_N(u)^2, 
$$
so that 
\begin{equation} \label{C_N_E_N} 
N^4/C_N \leq E_N \leq N^2 C_N 
\end{equation}
by Cauchy--Schwarz.  For a ``random'' sequence $(a_n)_{n \geq 1}$ one might expect an average number $\textup{rep}_N(u) \approx N^2 / C_N$ of representations for every $u \in A_N - A_N$, so that $E_N \approx N^4 / C_N$.
Theorem \ref{th1} shows that the typical order of $\delta_{\min}^{\alpha} (N)$ can be controlled very precisely in terms of the cardinality of the difference set $C_N$.\footnote{In the formulation of the theorem and throughout the paper we write $\log_2 x = \log \log x$, ~$\log_3 x = \log \log \log x$, etc. We will always read $\log x$ as $\max\{1, \log x\}$. Thus $1/\log n$ is well-defined for all integers $n \geq 1$. Furthermore, $\log_2 n$, $\log_3 n,$ and so on are positive and well-defined for all integers $n \geq 1$.}
\begin{thm} \label{th1}
Let $(a_n)_{n_ \geq 1}$ be a strictly increasing sequence of positive integers. Let $\ve > 0$. Then for almost all $\alpha \in [0,1]$ we have
\begin{equation} \label{th1_1}
\delta_{\min}^{\alpha} (N) \leq \frac{1}{C_N}\frac{\log_2 a_N}{\log N \log_2 N} \qquad \text{for infinitely many $N$},
\end{equation}
as well as
\begin{equation} \label{th1_2}
\delta_{\min}^{\alpha} (N) \geq \frac{1}{C_N \log N(\log_2 N)^{1+\ve}} \qquad \text{for all sufficiently large $N$.}
\end{equation}
The conclusion \eqref{th1_1} is also true for almost all $\alpha$ if the factor $\frac{\log_2 a_N}{\log N \log_2 N}$ is replaced by 1, thus yielding a result that depends only on $C_N$ and is independent of the actual size of the elements of $(a_n)_{n \geq 1}$.
\end{thm}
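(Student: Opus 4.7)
The plan is to translate each of the three claims in Theorem~\ref{th1} into a question about small values of $\|d\alpha\|$ for $d$ ranging over the positive elements of the difference set, and then invoke the classical (convergent) Borel--Cantelli lemma for \eqref{th1_2} and the Koukoulopoulos--Maynard theorem (the divergent side of the Duffin--Schaeffer conjecture) for \eqref{th1_1} and its factor-free variant. The bridge is the following dictionary: set $D^+ := (\{a_n\}_{n \geq 1} - \{a_n\}_{n \geq 1}) \cap \mathbb{Z}_{>0}$, and for $d \in D^+$ put $N(d) = \min\{N : d \in A_N - A_N\}$; then $\delta_{\min}^\alpha(N) < t$ is equivalent to $\|d\alpha\| < t$ for some $d \in D^+$ with $N(d) \leq N$, while $d \leq a_{N(d)}$ and the number of $d$'s with $N(d) = N$ is exactly $(C_N - C_{N-1})/2$. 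The problem now reduces to choosing a weight $\psi : D^+ \to (0, 1/2]$ that passes the Duffin--Schaeffer convergence or divergence test.

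For \eqref{th1_2} I would take $\psi(d) = c/(C_{N(d)} g(N(d)))$ with $g(N) = \log N \cdot (\log_2 N)^{1+\varepsilon}$ and verify $\sum_{d \in D^+} \psi(d) < \infty$. Grouping by $N(d)$ turns this into $\sum_N (C_N - C_{N-1})/(2 C_N g(N))$; using the elementary inequality $(C_N - C_{N-1})/C_N \leq \log(C_N/C_{N-1})$, an Abel summation, and $\log C_N \leq 2 \log N$, the sum is dominated by $\sum_N 1/(N (\log N)^2 (\log_2 N)^{2+2\varepsilon})$, which converges. Convergent Borel--Cantelli then forces $\|d\alpha\| \geq \psi(d)$ for all but finitely many $d \in D^+$ and almost every $\alpha$. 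Since $C$ and $g$ are non-decreasing, $\psi(d) \geq c/(C_N g(N))$ for every $d \in A_N - A_N$, and the finitely many exceptional $d$'s contribute only a positive a.s.\ lower bound on $\|d\alpha\|$ that eventually exceeds $c/(C_N g(N))$; this yields \eqref{th1_2} after a cosmetic adjustment of $\varepsilon$.

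For \eqref{th1_1} I would take $\psi(d) = h(N(d))$ with $h(N) = \log_2 a_N /(C_N \log N \log_2 N)$ and show $\sum_d \varphi(d)\psi(d)/d = \infty$. The factor $\log_2 a_N$ is placed in $h$ precisely so as to absorb the Rosser--Schoenfeld loss $\varphi(d)/d \gg 1/\log_2 d$ (applicable because $d \leq a_{N(d)}$); after this cancellation the Duffin--Schaeffer sum is comparable to $\sum_N (C_N - C_{N-1})/(C_N \log N \log_2 N) \asymp \int du/(u \log u \log_2 u) = +\infty$. Koukoulopoulos--Maynard then produces infinitely many $d$ with $\|d\alpha\| < h(N(d))$, each witnessing $\delta_{\min}^\alpha(N(d)) < h(N(d))$, and since $C_N \to \infty$ the values $N(d)$ are unbounded. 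The main obstacle is the factor-free variant: with $\psi(d) = c/C_{N(d)}$ the Rosser--Schoenfeld loss no longer cancels, and when $(a_n)$ grows doubly exponentially the naive divergence argument genuinely breaks down. I would dispose of this by a case split---when $\log_2 a_N \leq \log N \log_2 N$ holds for infinitely many $N$, \eqref{th1_1} already delivers the $1/C_N$ bound along that subsequence; in the complementary super-lacunary case one must upgrade the pointwise Rosser--Schoenfeld estimate to an averaged lower bound of the form $\sum_{N(d) = N} \varphi(d)/d \gg C_N - C_{N-1}$ on the ``new'' differences $a_N - a_j$, $j < N$, and establishing such an averaged Euler-ratio estimate on difference sets---most likely via a sieve or moment computation---is the genuine technical step.
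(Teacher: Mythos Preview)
Your treatment of the lower bound \eqref{th1_2} and of the first upper bound \eqref{th1_1} is correct and essentially matches the paper's argument (the paper enumerates the positive difference set as a sequence $(z_n)_{n\ge1}$ and indexes $\psi$ by the position $n$ rather than by $N(d)$, which makes the summations slightly cleaner, but the content is the same: first Borel--Cantelli for the lower bound, and Koukoulopoulos--Maynard together with $\varphi(d)/d\gg 1/\log_2 d$ for the first upper bound).

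The gap is in the factor-free upper bound. Your proposed averaged estimate $\sum_{N(d)=N}\varphi(d)/d\gg C_N-C_{N-1}$ is false in general, and no sieve or moment argument over the new differences can rescue it. For a concrete obstruction in the super-lacunary regime, take $a_1=1$ and $a_N=1+M_N\prod_{i\le \lfloor e^N\rfloor}p_i$ with $M_N$ chosen so large that $a_N>2a_{N-1}$. Then every difference $a_N-a_j$ with $j<N$ is divisible by the first $\lfloor e^j\rfloor$ primes, so $\varphi(a_N-a_j)/(a_N-a_j)\ll 1/j$, and the sum over $j<N$ is only $\asymp\log N$ while $C_N-C_{N-1}=2(N-1)$ and $C_N\sim N^2$. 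With $\psi(d)=c/C_{N(d)}$ the Duffin--Schaeffer series is then $\ll\sum_N(\log N)/N^2<\infty$, so the divergence criterion fails outright. (Your case split also has a looseness: the infinitely many $N$ produced by \eqref{th1_1} need not lie in the subsequence where $\log_2 a_N\le\log N\log_2 N$.)

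The paper circumvents this by \emph{not} passing to the full coprime reduction. For indices $n$ in a dyadic block $(2^{k/2},2^k]$ it works with the partially sieved sets $S_n^*$ obtained by removing only those subintervals centred at $a/z_n$ with $\gcd(a,z_n)$ divisible by some prime $p\le 4^k$. Mertens then gives $\lambda(S_n^*)\gg 1/(nk)$ \emph{uniformly in the arithmetic of $z_n$}, so $\sum_{2^{k/2}<n\le 2^k}\lambda(S_n^*)\gg 1$. The overlaps $\lambda(S_m^*\cap S_n^*)$ are controlled by a Pollington--Vaughan type estimate; to bound the resulting sum the paper replaces the (uncontrolled) large-prime part of each $z_n$ by a single prime just above $4^k$, producing auxiliary integers $y_n$ to which Koukoulopoulos--Maynard's structural Proposition~5.4 (not the Duffin--Schaeffer theorem itself) can be applied. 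The Chung--Erd\H os inequality then gives $\lambda\bigl(\bigcup_n S_n^*\bigr)\gg1$ for every large $k$, and Cassels's zero-one law upgrades this to full measure. The decisive idea you are missing is this partial sieve: by sieving only primes up to a threshold tied to the \emph{index} $n$ rather than to $z_n$, one bounds the measure loss purely in terms of $n$, which is exactly what the factor-free statement demands.
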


\begin{rmks}
\begin{enumerate}[(i)]
\item The effect of the cardinality of the difference set on the asymptotic order of the minimal gap has already been observed in Regavim's recent paper \cite{reg}, where equation \eqref{th1_2} appears in Section 14. Trivially $N \leq C_N  \leq N^2$, so \eqref{th1_2} always improves upon \eqref{rud_2}. Since the proof of \eqref{th1_2} can be given in a few lines, we include it below for the sake of completeness. 
\item By \eqref{C_N_E_N}, equation \eqref{th1_1} always improves upon \eqref{rud}.
Note, however, that the two statements are of a different nature. While \eqref{rud} holds for all sufficiently large $N$, \eqref{th1_1} holds for infinitely many $N$. The purpose of \eqref{th1_1} is to show that \eqref{th1_2} cannot be significantly improved, thus fixing the \emph{lower} endpoint of the range within which $\delta_{\min}^{\alpha}$ is contained for all except finitely many $N$. By contrast, \eqref{rud} is about determining the \emph{upper} endpoint of the range within which $\delta_{\min}^{\alpha}$ is contained for all except finitely many $N$. At the end of the proof of Theorem \ref{th1} we will comment on the difficulties to pass from a result for ``infinitely many $N$'' to one for ``all except finitely many $N$'' (but see Theorem \ref{t:allN} for a result of the latter type). We do believe, however, that the upper endpoint of the window within which $\delta_{\min}^{\alpha}$ is contained for all sufficiently large N (for almost all $\alpha$) is very close to the lower endpoint of that window, as provided by Theorem \ref{th1}. Using an argument similar to the one leading to \eqref{th1_2}, we can prove that for almost all $\alpha$
\begin{equation*}
\delta_{\min}^{\alpha} (N) \geq \frac{f(N)}{C_N} \qquad \text{for infinitely many $N$},
\end{equation*}
where $f$ is an arbitrary function tending to zero slowly as $N \to \infty$ (compare with the proof of the second part of Theorem \ref{th2} below). We believe that this is essentially optimal (say up to a factor $(\log N)^\ve$), and that for almost all $\alpha$ and every fixed $\ve>0$
\begin{equation} \label{conj_up}
\delta_{\min}^{\alpha} (N) \leq \frac{(\log N)^\ve}{C_N} \qquad \text{for all except finitely many $N$}.
\end{equation}

\item For rapidly (i.e.\ super-exponentially) growing sequences $(a_n)_{n \geq 1}$ we use \eqref{th1_1} without the factor $\frac{\log_2 a_N}{\log N \log_2 N}$ so that it becomes
$$
\delta_{\min}^{\alpha} (N) \leq \frac{1}{C_N} \qquad \text{for infinitely many $N$},
$$
which means that there is a discrepancy of logarithmic order in comparison with the lower bounds \eqref{th1_2}. This gap probably cannot be fully closed without taking fine arithmetic effects into account. This gap between \eqref{th1_1} and \eqref{th1_2} reflects the fact that Khintchine's theorem in metric Diophantine approximation generally fails without a monotonicity assumption, or without allowing a fine arithmetic criterion such as in the formulation of Catlin's conjecture, which will lead to our more precise Theorem \ref{th_cat} below.

\item For slowly (say polynomially) growing sequences $(a_N)_{n \geq 1}$ the gap between upper and lower bounds in Theorem \ref{th1} is only of order $\log_2 a_N \ll \log_2 N$. However we will show that, for sequences for which the difference set has a high relative density (within the maximal possible difference set $\{1, \dots, a_N\}-\{1, \dots, a_N\}$), the estimates from Theorem \ref{th1} can be strengthened even further to give extremely precise results. We illustrate this by considering as examples the sequences of primes and squares (Theorems \ref{examples} and \ref{th_quad} below, respectively). 

\item The conclusion of Theorem \ref{th1} is close to Rudnick's upper bound \eqref{rud} in cases where $E_N \approx N^2$ and thus $C_N \approx N^2$ (only improving the error $N^\varepsilon$ to errors of logarithmic order). Rudnick's result also meets with that of Theorem \ref{th1} when $E_N \approx N^3$ and $C_N \approx N$. However, in the intermediate range when $E_N$ is around $N^{\delta}$ for some $\delta \in (2,3)$, the estimate \eqref{rud} is significantly weaker than \eqref{th1_1}, except in the ``random'' case where all possible differences $a_m -a_n$ have a similar number of representations so that $E_N \approx N^4 / C_N$. Examples of such sequences include the Piatetski-Shapiro integers as well as the sequence of $(\log x)^A$ smooth numbers $n\leq x$ where we expect $C_N \approx N^{\beta_A}$ for some $\beta_A$ in the open interval $(1,2)$; see \cite{Banks} for more information.  
\item From the viewpoint of metric number theory, an upper bound such as \eqref{th1_1} is usually much more difficult to establish than a lower bound such as \eqref{th1_2}. This is because the upper bound \eqref{th1_1} uses the second rather than the first Borel--Cantelli lemma, which only holds under some additional ``stochastic independence'' requirement. We will comment on this in more detail during the proof of Theorem \ref{th1}. 
\end{enumerate}
\end{rmks}

To complement the discussion in Remark (ii) above, we note that it is possible to adapt Rudnick's method from \cite{rud} in order to obtain an upper bound valid for all but finitely many $N$ expressed in terms of $C_N$.

\begin{thm} \label{t:allN}
For every $\ve > 0$ and almost all $\alpha$, 
\[ \delta_{\min}^\alpha(N) \le \frac{N^\ve}{C_N} \qquad \text{for all except finitely many } N.
\]
\end{thm}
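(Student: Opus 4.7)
My plan is to adapt Rudnick's second moment argument from \cite{rud}. Rudnick's original approach counts pairs of indices and produces a variance controlled by the additive energy $E_N$, giving the bound $\delta_{\min}^\alpha(N) \le E_N/N^{4-\ve}$. To obtain $N^\ve/C_N$ instead, I count \emph{distinct} differences, which naturally produces $C_N$ in the main variance term at the cost of a more delicate off-diagonal analysis involving a GCD sum over the difference set.

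Fix $\ve>0$. Define a sparse subsequence $(N_k)$ recursively by $N_{k+1} = \min\{N > N_k : C_N \ge 2 C_{N_k}\}$, starting from $N_0 = 1$. Since $C_{N+1}/C_N < 2$ (because $C_{N+1} \le C_N + 2N$ and $C_N \ge 2N-1$ for any strictly increasing sequence of positive integers), this yields $C_{N_{k+1}}/C_{N_k} < 4$, and combined with $C_N \le N^2$ we get $N_k \ge 2^{k/2}$. Set $\psi_k = N_k^{\ve/2}/C_{N_k}$ and introduce the distinct-difference counting function
\[
W_k(\alpha) = \#\bigl\{u \in (A_{N_k} - A_{N_k}) \cap \mathbb{Z}_{>0} :\ \|u\alpha\| \le \psi_k\bigr\},
\]
so that $\{W_k \ge 1\} \subseteq \{\delta_{\min}^\alpha(N_k) \le \psi_k\}$. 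A direct computation gives $\mathbb{E}[W_k] = (C_{N_k}-1)\psi_k \asymp N_k^{\ve/2}$, with diagonal contribution to $\mathrm{Var}(W_k)$ of the same order. For the off-diagonal terms, a Fourier expansion of $\mathbbm{1}_{\{\|u\alpha\| \le \psi_k\}}$ gives the bound $\mu(\{\|u_1\alpha\|,\|u_2\alpha\| \le \psi_k\}) \le 4\psi_k^2 + O\bigl(\psi_k \gcd(u_1,u_2)/\max(u_1,u_2) \cdot \log N_k\bigr)$, so the off-diagonal variance is controlled (up to log factors) by $\psi_k \sum_{u_1 \ne u_2 \in U_{N_k}^+} \gcd(u_1,u_2)/\max(u_1,u_2)$. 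G\'al's classical inequality bounds this GCD sum by $C_{N_k} \cdot (\log N_k)^{O(1)}$, which yields $\mathrm{Var}(W_k)/\mathbb{E}[W_k]^2 = O\bigl((\log N_k)^{O(1)}/N_k^{\ve/2}\bigr)$.

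Chebyshev's inequality now gives $\mathbb{P}(W_k = 0) = O\bigl((\log N_k)^{O(1)}/N_k^{\ve/2}\bigr)$, which is summable in $k$ by the geometric growth $N_k \ge 2^{k/2}$. The first Borel--Cantelli lemma therefore yields $\delta_{\min}^\alpha(N_k) \le \psi_k$ for almost every $\alpha$ and all $k$ large enough. For an arbitrary $N$ with $N_k \le N < N_{k+1}$, monotonicity of $\delta_{\min}^\alpha$ in $N$ combined with $C_N \le 4 C_{N_k}$ and $N \ge N_k$ gives $\delta_{\min}^\alpha(N) \le 4 N_k^{\ve/2}/C_N \le N^\ve/C_N$ once $N$ is sufficiently large; the exponent $\ve/2$ in $\psi_k$ is chosen precisely to absorb this interpolation loss.

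The main technical challenge is the off-diagonal variance estimate. Rudnick's pair-counting argument naturally produces $E_N$ in the diagonal, and to replace $E_N$ by $C_N$ one must count distinct differences, shifting the burden onto a GCD sum that is then tamed by G\'al-type inequalities. A subsidiary point is that the logarithmic factor in the Fourier bound is, strictly speaking, $\log(u_{\max}/u_{\min}) \le \log a_{N_k}$, which for super-polynomially growing sequences can be much larger than $\log N_k$; this may be handled either by a dyadic decomposition of $U_{N_k}^+$ according to magnitude, or by the observation that Rudnick's original bound already matches $N^\ve/C_N$ for sequences (such as lacunary ones) where $E_N$ is close to the Cauchy--Schwarz lower bound $N^4/C_N$.
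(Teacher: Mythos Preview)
Your approach coincides with the paper's (Section~\ref{s:pf_allN}): count distinct differences rather than index pairs, bound the variance by a GCD sum over the difference set, then apply Chebyshev and Borel--Cantelli along a geometric subsequence with interpolation. One correction worth noting: the GCD sum you need is \emph{not} bounded by $C_{N_k}(\log N_k)^{O(1)}$ via ``G\'al's classical inequality'' (G\'al's bound applies to $\sum\gcd(u_1,u_2)^2/(u_1u_2)$, not to the exponent-$\tfrac12$ sum that arises here); the sharp bound, due to de~la~Bret\`eche and Tenenbaum~\cite{dlBT} and invoked in the paper, is $C_{N_k}\exp\bigl(C\sqrt{\log C_{N_k}\,\log_3 C_{N_k}/\log_2 C_{N_k}}\bigr)$, which is not polylogarithmic but is still $N_k^{o(1)}$ and hence suffices for your argument. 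Finally, your concern about a stray $\log a_{N_k}$ factor is unnecessary: the elementary lattice-point overlap bound $\lambda\bigl(\|u_1\alpha\|,\|u_2\alpha\|\le\psi\bigr)\ll\psi^2+\psi\,\gcd(u_1,u_2)/\max(u_1,u_2)$ carries no logarithm at all, and the paper's Fourier variant likewise produces $\gcd(z_m,z_n)/\sqrt{z_mz_n}$ with no dependence on the magnitude of the $z_n$.
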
 
\begin{rmk} 
The above bound should be compared with our guess \eqref{conj_up}.
See also the second remark at the end of Section \ref{s:pf_squares} for a case in which such an approach gives a significant improvement, namely when $(a_n)_{n \ge 1} = (n^2)_{n \ge 1}$.
\end{rmk}

Theorem \ref{th_cat} below shows that the problem of determining the typical asymptotic order of the minimal gap of $(a_n \alpha)_{n \geq 1}$ can be completely solved (at least in principle) solely by considering the difference sets $A_N - A_N$. For a set $A$ we denote by $A^+$ the set of all positive elements $A$. For $k \in \bigcup_{N \geq 1} (A_N - A_N)^+$, we define 
\begin{equation} \label{nkdef}
\mathcal{N}(k) = \min \{M \geq 1:~k \in (A_M - A_M)^+\}.
\end{equation} 
In words, $\mathcal{N}(k)$ is the time at which a certain positive integer $k$ first occurs in a difference set $A_M - A_M$ (and can contribute to producing a small gap $\|k \alpha\|$). For $k \not\in \bigcup_{N \geq 1} (A_N - A_N)^+$, we set $\mathcal{N}(k) = \infty$. Throughout the paper, $\varphi$ denotes Euler's totient function, and the supremum of the empty set is understood to be zero.

\begin{thm} \label{th_cat}
Let $(a_n)_{n_ \geq 1}$ be a strictly increasing sequence of positive integers. Let $(\eta(N))_{N \ge 1}$ be a sequence of non-negative reals. Let $\mathcal{A}$ denote the set of those $\alpha \in [0,1]$ for which $\delta_{\min}^{\alpha}(N) \leq \eta(N)$ holds for infinitely many $N$. Then we have $\lambda(\mathcal{A})= 0$ or  $\lambda(\mathcal{A})= 1$, according to whether the series
\begin{equation} \label{con_div}
\sum_{k=1}^\infty \varphi(k) \sup_{b \geq 1} \left\{\frac{\sup_{\ell \geq \mathcal{N}(bk)} \eta(\ell)}{bk} \right\}
\end{equation}
is convergent or divergent, respectively.
\end{thm}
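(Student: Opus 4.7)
The plan is to recast the event $\mathcal{A}$ as a Duffin--Schaeffer-type Diophantine approximation event and then invoke the theorem of Koukoulopoulos and Maynard. Set
\[
\psi(q) := \sup_{b \geq 1} \frac{\sup_{\ell \geq \mathcal{N}(bq)} \eta(\ell)}{b} \qquad (q \geq 1),
\]
with the usual convention that a supremum over the empty set equals zero. Then $\psi(q)/q$ is precisely the summand (after the factor $\varphi(k)$) in the series \eqref{con_div}, so that series is nothing other than the classical Duffin--Schaeffer series $\sum_q \varphi(q) \psi(q)/q$.

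The heart of the argument is to establish, modulo a set of measure zero, the equivalence
\[
\bigl[\,\delta_{\min}^\alpha(N) \leq \eta(N) \text{ for infinitely many } N\,\bigr] \;\Longleftrightarrow\; \bigl[\,|\alpha - p/q| \leq \psi(q)/q \text{ for infinitely many coprime } (p,q)\,\bigr].
\]
For $\Rightarrow$, given an $N$ realising $\delta_{\min}^\alpha(N) \leq \eta(N)$, I would pick a positive difference $k = a_m - a_n \in (A_N - A_N)^+$ with $\|k\alpha\| \leq \eta(N)$ and let $a$ be the integer nearest $k\alpha$. Writing $b = \gcd(a,k)$ and $p/q = a/k$ in lowest terms (so $k = bq$), I obtain $|\alpha - p/q| \leq \eta(N)/(bq) \leq \psi(q)/q$, the last step because $N \geq \mathcal{N}(k) = \mathcal{N}(bq)$. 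For $\Leftarrow$, any coprime pair with $|\alpha - p/q| \leq \psi(q)/q$ can be realised (up to an arbitrarily small $\varepsilon$ if the suprema in $\psi$ are not attained) as $|\alpha - p/q| \leq \eta(\ell)/(bq)$ for suitable $b \geq 1$ and $\ell \geq \mathcal{N}(bq)$; multiplying by $bq$ yields $\|bq\,\alpha\| \leq \eta(\ell)$, and since $bq \in (A_\ell - A_\ell)^+$ this gives $\delta_{\min}^\alpha(\ell) \leq \eta(\ell)$. To upgrade each side from one event to infinitely many, I use that for irrational $\alpha$ (a full-measure condition) the parameter $k = bq$ produced cannot remain bounded along the relevant subsequence --- else $\|k\alpha\| > 0$ would eventually exceed $\eta$ in the nontrivial regime --- so $\mathcal{N}(bq) \to \infty$ and both the $N$'s and the coprime pairs $(p,q)$ must range over infinitely many values.

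With this bridge in place, the zero-one law follows at once by applying the Koukoulopoulos--Maynard theorem to $\psi$: the set of $\alpha$ admitting infinitely many coprime $\psi$-approximations has Lebesgue measure $0$ or $1$ according as $\sum_q \varphi(q)\psi(q)/q$ converges or diverges, and the bridge identifies that set with $\mathcal{A}$ up to null sets. Note that in the convergent case the assertion is really only an elementary first Borel--Cantelli estimate on a suitable union of intervals around rationals; the deep input is needed only in the divergent direction, where the presence of $\varphi$ rather than $q$ in the series is exactly the Duffin--Schaeffer phenomenon.

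The main obstacle I foresee is the careful bookkeeping in the bridging equivalence, especially translating ``infinitely many $N$'' into ``infinitely many coprime pairs $(p,q)$''. A priori, many $N$'s could funnel through a single coprime pair $(p^\ast, q^\ast)$, producing the same approximation each time. One must show that (after restricting to irrational $\alpha$ and excluding the degenerate regime where $\eta$ does not force anything at all) this can happen for only finitely many $N$'s: if $\|b q^\ast \alpha\| \leq \eta(N)$ is to hold repeatedly while $|\alpha - p^\ast/q^\ast| > 0$, then $b$ must blow up, so $k = bq^\ast$ is unbounded and $\mathcal{N}(bq^\ast) \to \infty$. The nested suprema in the definition of $\psi$ (which may fail to be attained) require a similar $\varepsilon$-truncation. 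Once these routine but delicate moves are handled, the Koukoulopoulos--Maynard theorem can be applied as a black box.
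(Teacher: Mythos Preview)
Your approach is correct and rests on the same deep input (Koukoulopoulos--Maynard) as the paper, but you route through the Duffin--Schaeffer formulation with coprime pairs, whereas the paper applies the equivalent \emph{Catlin} formulation directly. Concretely, the paper sets $\psi(k) = \sup_{\ell \geq \mathcal{N}(k)} \eta(\ell)$ and shows that $\mathcal{A}$ \emph{equals} the limsup set $\mathcal{B}$ for the (non-coprime) approximation system $\|k\alpha\| \leq \psi(k)$; Catlin's theorem then yields the zero--one law with exactly the series \eqref{con_div}, since the $\sup_{b}$ is already built into Catlin's criterion. Your function $\psi(q) = \sup_{b} \psi_{\text{paper}}(bq)/b$ is precisely the ``coprime reduction'' of the paper's $\psi$, so the two series coincide and the two arguments are formally equivalent. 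What the paper's packaging buys is a cleaner bridge: no reduction to lowest terms, no $\varepsilon$-truncation for unrealised suprema, and an exact identity $\mathcal{A} = \mathcal{B}$ rather than equality modulo null sets. Your route is perfectly viable, but the bookkeeping you flag as ``the main obstacle'' (showing that infinitely many $N$'s yield infinitely many coprime pairs and vice versa, and disposing of the regime $\limsup \eta > 0$) is exactly what the Catlin black box absorbs for free.
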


\begin{rmks}
\begin{enumerate}[(i)]
\item Note that under the (reasonable) assumption that $\eta$ is decreasing the convergence/divergence criterion in Theorem \ref{th_cat} takes the simpler form
$$
\sum_{k=1}^\infty \varphi(k) \sup_{b \geq 1} \left\{ \frac{\eta(\mathcal{N}(bk))}{bk} \right\} \qquad \text{is convergent or divergent, respectively.}
$$

\item From a purely formal perspective, Theorem \ref{th_cat} is a complete solution to the problem of the asymptotic order of $\delta_{\min}^{\alpha}(N)$ for almost all $\alpha$. However, the practical value of Theorem \ref{th_cat} is of course limited since the convergence/divergence criterion involving the series \eqref{con_div} is more or less impossible to check in reality. Note that an evaluation of \eqref{con_div} would require a fully detailed understanding of the arithmetic structure of the difference sets $A_N - A_N$, something that is out of reach even in very ``simple'' cases such as $a_n = n^2, ~n \geq 1$.
\end{enumerate}
\end{rmks}

The following result (for the primes) is a strengthening of a result due to Rudnick \cite[Section $3$]{rud}, which only contained a lower bound of the correct order of magnitude. We obtain a precise convergence/divergence criterion, which does not contain any errors whatsoever. The assumptions on $\eta$ (monotonicity, regularly varying) are only to simplify the statement of the theorem.
\begin{thm}\label{examples}
Let $a_n=p_n$ be the $n$-th prime. Let $(\eta(N))_{N \geq 1}$ be a strictly decreasing sequence of non-negative reals such that $\eta(2N)/\eta(N) \gg 1$. Let $\mathcal{A}$ denote the set of those $\alpha \in [0,1]$ for which 
$$
\delta_{\min}^{\alpha}(N) \leq \eta(N \log N)
$$ 
holds for infinitely many $N$. Then we have $\lambda(\mathcal{A})= 0$ or  $\lambda(\mathcal{A})= 1$, according to whether the series
\begin{equation*} \label{con_div_ex}
\sum_{N=1}^\infty \eta(N)
\end{equation*}
is convergent or divergent, respectively.
\end{thm}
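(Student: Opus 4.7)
The approach is to recast Theorem \ref{examples} as a Duffin--Schaeffer--type question and then appeal to the Koukoulopoulos--Maynard theorem. Since $\eta$ is strictly decreasing, the condition $\delta_{\min}^\alpha(N) \le \eta(N \log N)$ for infinitely many $N$ is equivalent to the statement that $\|k\alpha\| \le \eta(\mathcal{N}(k) \log \mathcal{N}(k))$ holds for infinitely many $k$ in $\bigcup_{M \ge 1}(A_M - A_M)^{+}$: given $N$ and a witnessing pair $(m,n)$ with $k = p_m - p_n$, we have $\mathcal{N}(k) \le N$, and conversely given $k$ we may take $N = \mathcal{N}(k)$. Setting $\psi(k) := \eta(\mathcal{N}(k) \log \mathcal{N}(k))$ on this difference set (and $\psi(k) := 0$ elsewhere), the Koukoulopoulos--Maynard theorem then gives $\lambda(\mathcal{A}) \in \{0, 1\}$ according as $\sum_k \varphi(k) \psi(k)/k$ converges or diverges.

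The task is therefore to show that this series converges if and only if $\sum_N \eta(N)$ does. For the easy direction, the prime number theorem gives $\mathcal{N}(k) \log \mathcal{N}(k) \asymp p_{\mathcal{N}(k)} \ge k$, and the regularity $\eta(2N)/\eta(N) \gg 1$ yields $\psi(k) \ll \eta(k)$, whence $\sum_k \varphi(k) \psi(k)/k \ll \sum_N \eta(N)$; in particular $\sum_N \eta(N) < \infty$ implies $\lambda(\mathcal{A}) = 0$.

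For the converse direction I would regroup the sum by $m = \mathcal{N}(k)$,
\[
\sum_k \frac{\varphi(k)\psi(k)}{k} \;=\; \sum_{m \ge 1} \eta(m \log m) U(m), \qquad U(m) := \sum_{\mathcal{N}(k) = m} \frac{\varphi(k)}{k},
\]
and invoke the unconditional fact (essentially due to Erd\H{o}s, refined by Hooley and others) that all but $o(X)$ of the even integers up to $X$ arise as prime differences, so that $C_M \sim p_M/2 \sim (M\log M)/2$. Combined with the classical mean value $\sum_{k \le X} \varphi(k)/k \sim (6/\pi^2) X$ this gives the partial-sum estimate $\sum_{m \le M} U(m) \asymp M \log M$. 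A dyadic comparison then finishes: on each block $[2^j, 2^{j+1})$ of values of $m$ the factor $\eta(m \log m)$ is essentially constant by the regularity of $\eta$, so the block contributes $\asymp \eta(2^j j)\cdot 2^j j$; a generalised Cauchy condensation identifies $\sum_j 2^j j\, \eta(2^j j)$ with $\sum_N \eta(N)$ up to a constant.

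The main obstacle lies in this block-wise comparison in the divergent direction: the counting above controls only the total mass $\sum_{m \le M} U(m)$ and does not prevent the function $m \mapsto U(m)$ from being wildly irregular. Because $\eta(m \log m)$ varies by only a constant factor across each dyadic block this irregularity is in principle harmless, but a clean treatment must also rule out conspiracies in which the first-appearance differences at stage $m$ are systematically atypical for the function $\varphi(k)/k$. This last point is the most delicate and would be handled by Erd\H{o}s--Wintner-type regularity statements for $\varphi(k)/k$ on dense subsets of the integers.
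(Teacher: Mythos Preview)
Your outline is on the right track, but the divergence half is more laborious than the paper's argument and the gaps you flag are genuine. The paper proceeds much more directly. First, it uses only that the difference set has \emph{positive lower density} in $\mathbb{N}$: from the additive-energy bound $E_N \ll N^3/\log N$ and the inequality $C_N \geq N^4/E_N$ one gets $C_N \asymp N\log N \asymp p_N$, so $(A_N-A_N)^+$ occupies a fixed positive proportion of $\{1,\dots,p_N\}$. Your density-$1$ claim (``all but $o(X)$ even integers are prime differences'') is far stronger than needed. Second, instead of estimating $\sum_k \varphi(k)\psi(k)/k$ by regrouping over $\mathcal{N}(k)$, the paper simply sets $\eta^*(m)=\ve\,\eta(m)\,\mathbbm{1}(m\in(z_n))$ and invokes the \emph{classical} Duffin--Schaeffer theorem of 1941 (Corollary~3 in Harman's book), whose hypothesis --- $\eta^*$ non-increasing on a support of positive lower density --- is exactly what the previous step supplies. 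This single application packages away both of your acknowledged obstacles (the irregularity of $U(m)$ and possible $\varphi(k)/k$ conspiracies on the difference set): the Duffin--Schaeffer condition is automatic for monotone $\psi$ on positive-density support, so no finer analysis of $U(m)$ is required.

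Two further points. In your convergence direction, $\lambda(\mathcal{A})=0$ does \emph{not} follow from convergence of $\sum_k \varphi(k)\psi(k)/k$ alone --- that is precisely the Catlin/Duffin--Schaeffer distinction; what actually carries your argument is the stronger bound $\psi(k)\ll\eta(k)$, giving $\sum_k\psi(k)<\infty$ and hence measure zero by first Borel--Cantelli. In your divergence direction, the remark that the irregularity of $U(m)$ ``is in principle harmless'' understates a real gap: knowing only $\sum_{m\le M}U(m)\asymp M\log M$ does not bound the dyadic increments $V(2^{j+1})-V(2^j)$ from below, so the block-wise comparison does not go through as written. One would need blocks of a sufficiently large fixed ratio (chosen from the implied constants) to force positive increments before the Cauchy-condensation step can be applied.
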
 
Theorem \ref{examples} implies for example that for the sequence of primes, for almost all $\alpha$
\begin{equation*}
\delta_{\min}^{\alpha} (N) \leq \frac{1}{N(\log N)^2(\log_2 N)(\log_3 N)} \qquad \text{for infinitely many $N$},
\end{equation*}
while
\begin{equation*}
\delta_{\min}^{\alpha} (N) \geq \frac{1}{N (\log N)^{2}(\log_2 N)(\log_3 N)^{1+\ve}} \qquad \text{for all sufficiently large $N$.}
\end{equation*}

The next example concerns the sequence of squares (cf.\ Regavim's result in \cite[Section 11]{reg}). Using a combinatorial input on the cardinality of the difference set we obtain a result where the size of the error is only a power of $\log_3 N$. In the statement of the next theorem, $c = 1-(1+\log_2 2)/(\log 2) \approx 0.086$ is the constant from the answer to the Erd\H os multiplication table problem; see \cite{ford,ten}.

\begin{thm}\label{th_quad}
 Let $a_n = n^2$, $n\geq 1$, be the sequence of squares. For every $\ve > 0$ and almost all $\alpha \in [0,1]$, we have \begin{equation*} \delta_{\min}^{\alpha} (N) \leq \frac{(\log N)^{c-1} (\log_2 N)^{1/2}}{ N^2}  \qquad \text{for infinitely many $N$,} \end{equation*} as well as
\begin{equation*}
\delta_{\min}^{\alpha} (N) \geq  \frac{(\log N)^{c-1} (\log_2 N)^{1/2}}{ N^2 (\log_3 N)^{1 + \ve}}   \qquad \text{for all sufficiently large $N$.}
\end{equation*}
\end{thm}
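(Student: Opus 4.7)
The plan is to combine Theorems \ref{th1} and \ref{th_cat} with Ford's theorem on the Erd\H{o}s multiplication table problem as the combinatorial input. Since $n^2 - m^2 = (n-m)(n+m)$, the substitution $u = n-m$, $v = n+m$ puts the positive differences in bijection with integers $k = uv$ where $u < v$ are positive integers of the same parity with $u + v \leq 2N$. An adaptation of Ford's theorem then yields
\[
C_N \asymp \frac{N^2}{(\log N)^c (\log_2 N)^{3/2}},
\]
with the parity restriction and the window $u + v \leq 2N$ (in place of $u, v \leq N$) affecting only the implied constants.

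For the lower bound, I would substitute this $C_N$ into the first-moment Borel--Cantelli calculation underlying \eqref{th1_2}. With $\eta(N) = (\log N)^{c-1}(\log_2 N)^{1/2}/(N^2 (\log_3 N)^{1+\ve})$ and $N_j = 2^j$, the union bound
\[
\lambda\bigl(\{\alpha : \delta_{\min}^\alpha(N_j) < \eta(N_j)\}\bigr) \leq 2 C_{N_j}\eta(N_j) \asymp \frac{1}{j \log j \, (\log\log j)^{1+\ve}}
\]
is summable in $j$, and monotonicity of $\delta_{\min}^\alpha$ transfers the conclusion from the dyadic subsequence to all $N$. The extra factor $(\log_2 N)^{3/2}$ coming from Ford's formula is exactly what permits weakening the $(\log_2 N)^{1+\ve}$ of \eqref{th1_2} down to $(\log_3 N)^{1+\ve}$.

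For the upper bound, substituting $C_N$ directly into \eqref{th1_1} loses a factor of $\log_2 N$, producing $(\log_2 N)^{3/2}$ where $(\log_2 N)^{1/2}$ is claimed. To recover this factor we appeal instead to Theorem \ref{th_cat} with $\eta(N) = (\log N)^{c-1}(\log_2 N)^{1/2}/N^2$, and establish divergence of the series \eqref{con_div}; already the $b=1$ contribution will suffice. Regrouping $k$ according to dyadic ranges $\mathcal{N}(k) \in [2^j, 2^{j+1})$, the number of such $k$ is $\asymp 4^j/(j^c(\log j)^{3/2})$ by the first paragraph, each carries $\eta(\mathcal{N}(k)) \asymp j^{c-1}(\log j)^{1/2}/4^j$, and $\varphi(k)/k$ should average to a positive constant. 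The dyadic blocks therefore contribute $\asymp 1/(j \log j)$, whose sum diverges, and Theorem \ref{th_cat} concludes the argument.

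I expect the main obstacle to be twofold: (i) verifying that Ford's multiplicative probabilistic framework indeed carries over under the parity and sum-window constraints of the first step (the conclusion is the expected one, but the analysis must be revisited); and (ii) establishing, in the upper-bound step, a lower bound of the form $\sum_{\mathcal{N}(k)\in[2^j,2^{j+1})} \varphi(k)/k \gg 4^j/(j^c(\log j)^{3/2})$, since Ford's numbers are not ``typical'' integers and a priori $\varphi(k)/k$ could be abnormally small on them. Both appear to be within reach of the existing multiplication-table literature, but require nontrivial bookkeeping beyond Ford's bare cardinality statement.
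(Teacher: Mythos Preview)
Your outline is correct and follows essentially the same route as the paper: Ford's multiplication table asymptotic $C_N \asymp N^2(\log N)^{-c}(\log_2 N)^{-3/2}$ combined with first Borel--Cantelli for the lower bound, and a Duffin--Schaeffer divergence criterion for the upper bound. Your routing of the upper bound through Theorem~\ref{th_cat} is only cosmetically different from the paper's direct appeal to the Koukoulopoulos--Maynard theorem on the coprime sets $S_n^{\textup{coprime}}$, since Theorem~\ref{th_cat} is itself proved from that theorem; the series whose divergence must be checked is the same in both formulations.

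On your two obstacles: (i) is disposed of in one line --- the upper bound on $C_N$ holds because every difference $m^2-n^2$ with $m,n\leq N$ lies in the $N\times N$ multiplication table, and the matching lower bound follows by restricting to products $(2a)(2b)$ with $a,b\leq N/2$, which all lie in $A_N-A_N$. For (ii), you have overstated what is needed: it is not necessary that $\varphi(k)/k$ average to a positive constant over the difference set. The paper instead combines Mertens with \cite[Lemma~7.3]{km} to show
\[
\#\bigl\{k\le x:\ \varphi(k)/k < 2/\log_3 k\bigr\}\ \ll\ \frac{x}{(\log x)^2},
\]
which is negligible against the Ford cardinality $\asymp x(\log x)^{-c}(\log_2 x)^{-3/2}$. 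Thus outside a set that is swallowed by the difference set one has $\varphi(k)/k\gg 1/\log_3 k$. In your dyadic accounting this costs only an extra factor $\log_2 j$, so each block contributes $\gg 1/(j\log j\,\log_2 j)$ instead of $1/(j\log j)$; the sum still diverges and Theorem~\ref{th_cat} applies.
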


\begin{rmk}
We note that several interesting metric results on minimal gaps have recently been obtained by Regavim \cite{reg} in the setting when $(a_n)_{n \geq 1}$ is a sequence of reals rather than integers. The case of real-valued sequences is technically much more complicated, and some extra assumptions are typically necessary, ensuring for example that the minimal spacings of $(a_n)_{n \geq 1}$ itself are not too small. We will not pursue this direction here, but it would be interesting to know to what extent the reasoning from the present paper could be transferred to the real-valued setup.
\end{rmk}

Now we switch to a different setting. Consider the set of all numbers of the form $\alpha m^2 + n^2$, where $m,n \geq 1$ and $\alpha > 0$. These numbers represent the energy spectrum of a rectangular billiard. The case $\alpha \in \mathbb{Q}$ is special, and will not be considered in this paper. If $\alpha \not\in \mathbb{Q}$, then the spectrum is simple, and we can write $0 < \lambda_1 < \lambda_2 < \dots$ for the set $\{\alpha m^2 + n^2:~m,n \geq 1\}$, sorted in increasing order. The asymptotic order of $(\lambda_k)_{k \geq 1}$ is given by counting lattice points, which yields that
\[
\lambda_k \sim \frac{4 \sqrt{\alpha}}{\pi} k
\]
as $k \to \infty$.  The Berry--Tabor conjecture predicts on a very general level that local statistics of the energy spectrum of most integrable quantum systems should follow the Poissonian model. In the case of rectangular billiards, the conjecture is assumed to be true for irrational $\alpha$ which cannot be approximated very well by rationals. Sarnak \cite{sar} proved that the pair correlation of $(\lambda_k)_{k \geq 1}$ is Poissonian for almost all $\alpha$, and Eskin, Margulis and Mozes \cite{emm} established the corresponding result for individual values of $\alpha$ satisfying a weak Diophantine assumption. Blomer, Bourgain, Radziwi{\l}{\l} and Rudnick \cite{bbrr} studied the asymptotic order of the minimal gap statistic of that sequence, that is, the quantity
$$
\delta_{\min}^{(\alpha)} (N) = \min \{ \lambda_{k+1} - \lambda_k :~1 \leq k \leq N\}. 
$$
They proved asymptotic upper and lower bounds for $\delta_{\min}^{(\alpha)} (N)$ assuming that $\alpha$ satisfies certain Diophantine approximation properties; for example they established results for certain quadratic irrationals, which have since been extended to all positive quadratic irrationals by Carmon \cite{Carmon}, and for algebraic irrationals of higher degree. We refer to \cite{bbrr} for the precise statement of their results. In the metric setup, they proved that for every $\ve > 0$ and almost all $\alpha > 0$ 
$$
\delta_{\min}^{(\alpha)} (N) \ll N^{-1 + \ve} \qquad \text{as $N \to \infty$}
$$
and
\begin{equation}\label{e:BBRR}
\delta_{\min}^{(\alpha)} (N) \gg \frac{(\log N)^c}{N} \qquad \text{infinitely often}. 
\end{equation}
As in the statement of Theorem \ref{th_quad}, $c$ denotes the constant from the multiplication table problem. It should be noted that these metric results are \emph{not} in accordance with the behavior of the Poisson process, where a convergence/divergence criterion precisely quantifies the almost sure asymptotic order of the smallest gap; in particular, in the case of the Poisson process there are (almost surely) infinitely many $N$ for which $\delta_{\min} (N) \leq \frac{1}{N \log N}$, but only finitely many $N$ for which $\delta_{\min} (N) \leq \frac{1}{N (\log N)^{1 + \ve}}$. In the opposite direction, there are almost surely infinitely many $N$ for which $\delta_{\min} (N) \geq \frac{\log_2 N}{N}$, but only finitely many $N$ for which $\delta_{\min} (N) \geq \frac{(1+\ve)\log_2 N}{N}$. See \cite{dev} for more details. This deviation between the order of minimal gaps in the Poisson process in comparison to minimal gaps in the spectrum of rectangular billiards is directly related to the cardinality of the difference set of the squares, and to the fact that a multiplication table of size $N \times N$ does not contain $N^2$ but only $o(N^2)$ distinct entries. Thus the following theorem is very much in accordance with the ``cardinality of difference set controls the typical order of minimal gaps'' philosophy, even if from the viewpoint of Diophantine approximation the situation is now much more delicate. As it will turn out during the proofs, for this problem we will not only encounter the cardinality of a difference set that controls the number of admissible denominators in the Diophantine approximation problem (as in all the previous theorems), but now there will also be a second such phenomenon with respect to the cardinality of the admissible set of numerators. We will prove the following.

\begin{thm} \label{th2}
Let $c = 1-(1+\log_2 2)/(\log 2)$ and $\varepsilon>0$. Then for almost all $\alpha > 0$
\begin{equation} \label{th2_b}
\delta_{\min}^{(\alpha)} (N) \leq \frac{(\log N)^{2c}}{N \log N}  \qquad \text{only finitely often},
\end{equation}
as well as
\begin{equation} \label{th2_c}
\delta_{\min}^{(\alpha)} (N) \geq \frac{(\log N)^{2c}}{N} \qquad \text{infinitely often}.
\end{equation}
\end{thm}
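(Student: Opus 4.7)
My plan is to reduce the minimal-gap condition to a Diophantine approximation problem involving two restricted difference sets of squares, bound the measure of the resulting bad set using the Erd\H{o}s--Ford multiplication-table estimate applied to both, and then apply the first Borel--Cantelli lemma along a subsequence chosen to match the bound one is aiming for. Fix a compact subinterval $[A,B] \subset (0,\infty)$ of $\alpha$'s; a countable union of such intervals gives the full statement. If $\delta_{\min}^{(\alpha)}(N) \leq \eta$ then there exist distinct lattice points $(m,n) \neq (m',n')$ with $\alpha m^2 + n^2 \leq \lambda_N \sim \tfrac{4\sqrt{\alpha}}{\pi}\,N$, so all of $m, m', n, n'$ are $\ll \sqrt{N}$, and $|\alpha(m^2-m'^2) - (n'^2-n^2)| \leq \eta$. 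Up to a sign we may take $a := m^2-m'^2 > 0$ and $b := n'^2-n^2 > 0$, where $a \in \mathcal{D}^{(1)}_N$ and $b \in \mathcal{D}^{(2)}_N$ are elements of the positive difference sets of the squares of integers $\ll \sqrt{N}$.

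By an Erd\H{o}s--Ford type bound for such restricted multiplication tables, $|\mathcal{D}^{(i)}_N \cap [0,X]| \ll X/((\log N)^c(\log_2 N)^{3/2})$ uniformly for $1 \leq X \ll N$. A union bound over admissible pairs, using that each pair $(a,b)$ contributes an interval of length $2\eta/a$ to the bad set and applying the above estimate once to count admissible $b \in [Aa,Ba]$ for each $a$ and once to sum over admissible $a$, gives
\[
\lambda\bigl\{\alpha \in [A,B] : \delta_{\min}^{(\alpha)}(N) \leq \eta\bigr\} \;\ll\; \frac{\eta\,N}{(\log N)^{2c}(\log_2 N)^3}.
\]
The squared Erd\H{o}s--Ford saving is exactly the ``both numerator and denominator are restricted'' phenomenon emphasised just before the statement, and is the source of the exponent $2c$ appearing in the conclusion.

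For \eqref{th2_b} I set $\eta(N) = (\log N)^{2c}/(N\log N)$; the measure bound becomes $\ll 1/(\log N(\log_2 N)^3)$, and evaluating at $N = 2^k$ gives $\ll 1/(k(\log k)^3)$, which is summable. The first Borel--Cantelli lemma then yields $\delta_{\min}^{(\alpha)}(2^k) > \eta(2^k)$ for all large $k$ and almost every $\alpha$; the monotonicity of $\delta_{\min}^{(\alpha)}$ in $N$ together with the regular variation of $\eta$ interpolates between dyadic scales at the cost of an absorbable constant. For \eqref{th2_c} I set $\tau(N) = (\log N)^{2c}/N$; the same bound now reads $\ll 1/(\log_2 N)^3$, which is not summable even along $N = 2^k$, so I pass to a sparser subsequence such as $N_j := 2^{2^j}$, along which $\lambda\bigl\{\delta_{\min}^{(\alpha)}(N_j) \leq \tau(N_j)\bigr\} \ll 1/j^3$ is summable. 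Borel--Cantelli then gives $\delta_{\min}^{(\alpha)}(N_j) > \tau(N_j)$ for all large $j$, producing the required infinitely many $N$.

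The main obstacle is the Erd\H{o}s--Ford input itself: one has to verify that the restricted difference set $\{m^2-m'^2 : 0 \leq m' < m \ll M\}$ obeys the ``short multiplication-table'' cardinality bound, both globally and uniformly on the sub-intervals $[Aa,Ba]$ needed inside the union bound, with a trivial-length estimate taking over for very small $a$. Once this combinatorial input is in place, the separation into ``dyadic Borel--Cantelli for \eqref{th2_b}'' versus ``sparse Borel--Cantelli for \eqref{th2_c}'' is essentially mechanical, and the two growth rates of $\eta$ and $\tau$ precisely match the ``eventually'' versus ``infinitely often'' dichotomy in the statement.
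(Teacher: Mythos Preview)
Your approach is essentially the paper's: reduce the small-gap event to a union of short intervals indexed by pairs from the difference set of squares, apply Ford's multiplication-table estimate twice (once for the numerator set, once for the denominator set) to obtain the measure bound $\ll \eta N/(\log N)^{2c}$, and then feed this into Borel--Cantelli along a subsequence. Two points deserve comment.

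First, the uniform estimate $|\mathcal{D}_N \cap [0,X]| \ll X/((\log N)^c(\log_2 N)^{3/2})$ you state is \emph{false} as written: every integer $d \not\equiv 2 \pmod 4$ with $d \leq \sqrt N$ already lies in $\mathcal{D}_N$ (via $d=(k{+}1)^2-k^2$ or $(k{+}1)^2-(k{-}1)^2$), so on short initial segments the density is positive and no $(\log N)^c$ saving is available. The paper handles this by splitting at $q=N/\log N$: for $q>N/\log N$ one invokes Ford's localised divisor estimate $H(x,y,z)$ (Theorem~1(v) of \cite{ford}), which delivers the saving $(\log N)^{-c}(\log_2 N)^{c-3/2}$ because the divisor $n+n'$ is forced into the narrow window $(\sqrt q,\,2\sqrt N]$; for $q\le N/\log N$ the trivial bound $\lambda(S_X^{(q)})\le 2/X$ contributes $\ll \eta N/\log N$, which is negligible since $2c<1$. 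Your closing remark about ``a trivial-length estimate taking over for very small $a$'' is exactly this split, but you should not claim the Ford bound uniformly.

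Second, for \eqref{th2_c} the paper avoids your sparse subsequence $N_j=2^{2^j}$ entirely: once one knows that $\lambda\{\alpha:\delta_{\min}^{(\alpha)}(N)\le \tau(N)\}\to 0$, Fatou's lemma immediately gives $\lambda\bigl(\liminf_N\{\ldots\}\bigr)=0$, so almost every $\alpha$ lies outside the event for infinitely many $N$. Your route via summable measures along $2^{2^j}$ is correct but slightly more laboured.
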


We note that \eqref{th2_c} improves on the exponent of $\log N$ in \eqref{e:BBRR} (\cite[Theorem 1.3]{bbrr}), but the main significance of Theorem \ref{th2} is that we believe \eqref{th2_b} and \eqref{th2_c} to be optimal, up to factors of order $(\log N)^{o(1)}$. More precisely, we believe that in the setting of Theorem \ref{th2} for any $\ve>0$
\begin{equation} \label{th2_a}
\delta_{\min}^{(\alpha)} (N) \leq \frac{(\log N)^{2c+\ve}}{N \log N} \qquad \text{infinitely often}
\end{equation}
and 
\begin{equation} \label{th2_d}
\delta_{\min}^{(\alpha)} (N) \geq \frac{(\log N)^{2c+\ve}}{N} \qquad \text{only finitely often},
\end{equation}
for almost all $\alpha > 0$. We have not been able to establish \eqref{th2_a} and \eqref{th2_d}, but we will comment on them after giving the proof of Theorem \ref{th2}.

\section{Proof of Theorem \ref{th_cat}}

We start with the proof of Theorem \ref{th_cat}. The following statement was long known as Catlin's conjecture \cite{cat}. It was recently established as a consequence of Koukoulopoulos and Maynard's proof of the Duffin--Schaeffer conjecture, see \cite[Theorem 2]{km}.

\begin{lemma} \label{lemma_cat}
Let $(\psi(k))_{k \geq 1}$ be a sequence of non-negative reals. Let $\mathcal{B}$ denote the set of those $\alpha \in [0,1]$ for which the inequality
$$
\left| \alpha - \frac{a}{k} \right| 	\leq \frac{\psi(k)}{k}
$$
has infinitely many solutions $(a,k)$ with $0 \leq a \leq k$. Then $\lambda(\mathcal{B}) = 0$ or $\lambda(\mathcal{B})=1$, according to whether the series 
$$
\sum_{k=1}^\infty \varphi(k) \sup_{b \geq 1} \left\{\frac{\psi(b k)}{bk} \right\}.
$$
is convergent or divergent, respectively.
\end{lemma}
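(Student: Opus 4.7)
The plan is to deduce Lemma~\ref{lemma_cat} from the Koukoulopoulos--Maynard resolution of the Duffin--Schaeffer conjecture by a standard reduction to primitive (coprime) approximations. Every pair $(a,k)$ with $0 \le a \le k$ can be written uniquely as $a = ba'$, $k = bk'$ with $\gcd(a',k')=1$ and $b \geq 1$, and the inequality $|\alpha - a/k|\leq \psi(k)/k$ is then equivalent to $|\alpha - a'/k'| \leq \psi(bk')/(bk')$. So the natural thing to do is introduce
$$
\Psi(k) \;=\; \sup_{b \geq 1}\frac{\psi(bk)}{bk},
$$
and compare $\mathcal{B}$ with the ``primitive'' set $\mathcal{B}^{*}$ of those $\alpha$ for which $|\alpha - a'/k'| \leq \Psi(k')$ has infinitely many solutions $(a',k')$ with $\gcd(a',k')=1$. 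Applying the Koukoulopoulos--Maynard theorem to the approximation function $k \mapsto k\Psi(k) = \sup_{b}\psi(bk)/b$ would then say that $\mathcal{B}^{*}$ has measure $0$ or $1$ according to whether $\sum_{k} \varphi(k)\Psi(k)$ converges or diverges -- which is precisely the series in the statement of the lemma.

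Next I would handle the two directions separately. For the divergence case, apply the theorem instead to $\Psi/2$ (whose sum still diverges) to obtain a full-measure set on which infinitely many coprime pairs $(a',k')$ satisfy $|\alpha - a'/k'| \leq (1/2)\sup_{b}\psi(bk')/(bk')$. By definition of the supremum, for each such pair we may choose $b = b(a',k')$ with $\psi(bk')/(bk') \geq |\alpha - a'/k'|$, and then $(a,k) = (ba',bk')$ solves the original inequality; since distinct coprime pairs give distinct $(a,k)$, we conclude $\alpha \in \mathcal{B}$ and $\lambda(\mathcal{B})=1$. For the convergence case, let $\alpha$ be irrational and lie in $\mathcal{B}$. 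Every solution $(a,k)$ reduces to a triple $(a',k',b)$ with $|\alpha - a'/k'| \leq \Psi(k')$, and two cases arise: either (a) infinitely many distinct coprime pairs $(a',k')$ appear, in which case $\alpha \in \mathcal{B}^{*}$ which is null by Koukoulopoulos--Maynard; or (b) some fixed coprime $(a',k')$ is reached by infinitely many multipliers $b$, forcing $\psi(bk')/(bk') \geq |\alpha - a'/k'| =: \delta > 0$ for those $b$. In case (b) the distinct integers $m = bk'$ satisfy $\varphi(m)\Psi(m) \geq \psi(m)/m \geq \delta$ infinitely often, contradicting convergence of the series. Since the rationals are null, $\lambda(\mathcal{B})=0$.

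The only genuinely nontrivial input is of course the Koukoulopoulos--Maynard theorem itself, which supplies the coprime dichotomy and which is the real hard part; everything else is bookkeeping that translates between the ``all pairs $(a,k)$'' formulation of $\mathcal{B}$ and the coprime formulation to which their theorem directly applies. The one minor subtlety is the ``sup versus max'' issue in the definition of $\Psi(k)$, which is resolved in the divergent case by passing to $\Psi/2$ and in the convergent case by the pigeonhole dichotomy (a)/(b) above.
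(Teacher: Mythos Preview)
Your argument is correct, and it is essentially the standard deduction of Catlin's conjecture from the Duffin--Schaeffer theorem. The paper itself does not give a proof of this lemma at all: it simply quotes the statement as a known consequence of Koukoulopoulos--Maynard, referring to \cite[Theorem~2]{km}. What you have written is, up to cosmetic details, precisely how that Theorem~2 is derived from the main Duffin--Schaeffer result in \cite{km}: pass to reduced fractions, replace $\psi(k)/k$ by the supremum $\Psi(k)=\sup_{b\ge1}\psi(bk)/(bk)$, and apply the coprime zero--one law to the function $k\mapsto k\Psi(k)$. Your handling of the two small technical points --- using $\Psi/2$ in the divergence case so that the supremum is genuinely attained (up to a factor), and the pigeonhole dichotomy (a)/(b) to rule out repeated reduced fractions in the convergence case --- is the expected way to make the reduction rigorous.
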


Now we give the proof of Theorem \ref{th_cat}. Let $(\eta(k))_{k \geq 1}$ be a sequence of non-negative reals. Let $A = \bigcup_{N \geq 1} (A_N - A_N)^+$. First we settle the case when $\limsup_{k \to \infty} \eta(k) >0$. It can be shown quite easily that in this case $\|(a_m-a_n)\alpha\| \leq \eta(N)$ has infinitely many solutions for almost all $\alpha$. This follows for example from Weyl's general equidistribution result \cite[Satz 21]{weyl}, together with the simple observation that $A$ is an infinite set of positive integers. Thus in this case we have $\lambda(\mathcal{A}) = 1$. It is also not difficult to see that the series \eqref{con_div} is necessarily divergent in this case. Fix a number $k \in A$. Then we have $\mathcal{N}(k) < \infty$, where $\mathcal{N}(k)$ is defined as in \eqref{nkdef}. Taking a sum over all divisors $d$ of $k$, we have
$$
\sum_{ \substack{d | k,\\ d \geq \log k}} \varphi(d)  \sup_{b \geq 1} \left\{ \frac{\sup_{\ell \geq \mathcal{N}(bd)} \eta(\ell)}{bd} \right\} \geq \sum_{ \substack{d | k,\\ d \geq \log k}} \varphi(d)  \frac{\sup_{\ell \geq \mathcal{N}(k)}\eta(\ell)}{k} \geq \frac{ \sup_{\ell \geq \mathcal{N}(k)}\eta(\ell)}{2}
$$
for sufficiently large $k$, where due to $\mathcal{N}(k)<\infty$ the supremum is non-empty. By assumption the supremum is bounded below by an absolute constant, and since we can find arbitrarily large $k \in A$ this can be used to deduce that \eqref{con_div} is indeed divergent in this case.

For the rest of this proof we can assume that $\lim_{k \to \infty} \eta(k) = 0$. Set
$$
\psi(k) = \sup_{\ell \geq \mathcal{N}(k)} \eta(\ell) \qquad \textrm{and} \qquad \psi^{*}(k) = \varphi(k) \cdot \sup_{b \geq 1} \left\{\frac{\psi(b k)}{bk} \right\}.
$$
For every $k \in A$ we set
\begin{equation} \label{Sk_def}
S_k = [0,1] \cap \left( \bigcup_{0 \leq a \leq k} \left( \frac{a}{k} - \frac{\psi(k)}{k}, \frac{a}{k} + \frac{\psi(k)}{k} \right) \right).
\end{equation}
For convenience we also set $S_k = \emptyset$ for all $k \not\in A$. Let $\mathcal{B}$ denote the set of those $\alpha \in [0,1]$ which are contained in $S_k$ for infinitely many values of $k$. By Lemma \ref{lemma_cat} we have $\lambda(\mathcal{B})=0$ or $\lambda(\mathcal{B})=1$, according to whether the series
$$
\sum_{k=1}^\infty \varphi(k) \sup_{b \geq 1} \left\{\frac{\psi(b k)}{bk} \right\} = \sum_{k=1}^\infty \varphi(k) \sup_{b \geq 1} \left\{\frac{\sup_{\ell \geq \mathcal{N}(bk)} \eta(\ell)}{bk} \right\}
$$
converges or diverges, respectively. Note that this is the same series as in equation \eqref{con_div} in the statement of Theorem \ref{th_cat}. To complete the proof of Theorem \ref{th_cat}, we will show that the set $\mathcal{A}$ defined in the statement of the theorem is actually the same as the set $\mathcal{B}$ defined above.

Let us first assume that $\alpha \in \mathcal{A}$. Then there are infinitely many values of $N,m,n$ with $m \neq n$ and $1 \leq m,n \leq N$ such that $\|(a_m-a_n) \alpha \| \leq \eta(N)$. Set $k = a_m - a_n$. Since $m,n \leq N$ we have $\mathcal{N}(k) \leq N$. Thus we have $\psi(k) \geq \eta(N)$. Consequently $\|(a_m - a_n) \alpha\| = \|k \alpha\| \leq \eta(N)$ implies that $\|k \alpha\| \leq \psi(k)$, and thus $\alpha \in S_k$. Since we are in the case $\eta(N) \to 0$, a particular difference $k = a_m - a_n$ can only generate finitely many values $N$ (together with $m,n \leq N$) such that $\|(a_m - a_n) \alpha\| \leq \eta(N)$. Thus we can find infinitely many different values of $k$ such that $\alpha \in S_k$, in other words we proved that $\mathcal{A} \subset \mathcal{B}$.

Now let us assume that $\alpha \not\in \mathcal{A}$. Then there are only finitely many $N$ and $m,n$ with $m \neq n$ and $1 \leq m,n \leq N$ such that $\|(a_m - a_n) \alpha\| \leq \eta(N)$. This implies that there are only finitely many $k \geq 1$ and $N$ such that $k \in A_N - A_N$ and $\|k \alpha\| \leq \eta(N)$. Since $\eta(N) \to 0$, there are only finitely many $k \geq 1$, each together with finitely many $\ell \geq \mathcal{N}(k)$, such that $\|k \alpha\| \leq \eta(\ell)$. Consequently there are only finitely many $k$ such that $\|k \alpha\| \leq \sup_{\ell \geq \mathcal{N}(k)} \eta(\ell)$, which is the same as saying that there are only finitely many $k$ such that $\alpha \in S_k$.

Thus we have established that $\mathcal{A} = \mathcal{B}$. Since the convergence, respectively the divergence of \eqref{con_div} gives a criterion for a zero-one law for $\lambda(\mathcal{B})$, it consequently also gives a criterion for a zero-one law for $\lambda(\mathcal{A})$, as claimed in the statement of Theorem \ref{th_cat}.

\section{Proof of Theorem \ref{th1}, part 1: Lower bound}\label{lowerbound}

Now we come to the proof of Theorem \ref{th1}. For convenience of writing, we arrange the elements of $A = \bigcup_{N \geq 1} (A_N - A_N)^+$ into a sequence. Thus, let $(z_n)_{n \geq 1}$ be a sequence of distinct integers, such that for all $N$ we have 
\begin{equation} \label{zndef}
\{z_n:~1 \leq n \leq C_N \} = (A_N - A_N)^+.
\end{equation}
The sequence $(z_n)_{n \geq 1}$ is not uniquely defined, since $C_{N+1} - C_N$ can be as large as $N$, but it is not important which possible version of $(z_n)_{n \geq 1}$ we take as long as \eqref{zndef} is satisfied. For $n \ge 1$, we set 
\[
\psi(n) = \frac{1}{n (\log{\sqrt{n}}) (\log_2{ \sqrt{n}})^{1+\ve}}
\]
and
\begin{equation} \label{Sk_def_new}
S_n =[0,1] \cap \left( \bigcup_{0 \leq a \leq z_n} \left( \frac{a}{z_n} - \frac{\psi(n)}{z_n},  \frac{a}{z_n} + \frac{\psi(n)}{z_n} \right) \right).
\end{equation}
We clearly have
$$
\sum_{n=1}^\infty \lambda(S_n) = \sum_{n=1}^\infty \min\{2 \psi(n),1\} \leq \sum_{n=1}^\infty \frac{2}{n (\log{\sqrt{n}}) (\log_2{ \sqrt{n}})^{1 + \ve}} < \infty, 
$$
thus by the first Borel--Cantelli lemma almost all $\alpha \in [0,1]$ are contained in finitely many sets $S_n$. If $\alpha \not\in S_n$ for all sufficiently large $n$, then $\|z_n \alpha\| \geq 1 / (n (\log{\sqrt{n}}) (\log_2{ \sqrt{n}})^{1+ \ve})$ for all sufficiently large $n$. Since $\{z_1, \dots, z_{C_N}\} = (A_N - A_N)^+$, this implies that for all sufficiently large $N$ and all $a_m,a_n \in A_N$ we have 
$$
\|(a_m - a_n) \alpha\| \geq 1 / (C_N (\log{\sqrt{C_N}}) (\log_2 \sqrt{C_N})^{1 + \ve}) \geq 1 / (C_N \log N (\log_2 N)^{1+\ve}),
$$ 
where we used that $C_N \leq N^2$. This proves the lower bound in Theorem \ref{th1}.

\section{Proof of Theorem \ref{th1}, part 2: Upper bound}

As usual in metric number theory, the ``divergence'' part is much more difficult than the ``convergence'' part, since the divergence part of the Borel--Cantelli lemma requires some form of stochastic independence, while the convergence part holds unconditionally; see \cite{bvbc} for a detailed discussion of this issue. In Section \ref{sub_out}, we will prove the upper bound of Theorem \ref{th1} which depends on the size of $a_N$, and sketch the argument leading to a bound that is independent of the size of $a_N$. In Section \ref{sub_over} we introduce the precise construction for this general upper bound and collect several auxiliary results, and in Section \ref{sub_th} we prove the upper bound in Theorem \ref{th1} which is independent of the size of $a_N$.

\subsection{Outline and heuristics} \label{sub_out}

When defining $S_n$ as in \eqref{Sk_def_new} above, but just replacing $\psi(n)$ by a slightly larger function, the resulting set system cannot be assumed to be sufficiently ``independent'' for a direct application of the second Borel--Cantelli lemma -- this is the message from Duffin and Schaeffer's counterexample \cite{ds} to Khintchine's conjecture. It is known that in metric Diophantine approximation one does not need full stochastic independence, but that it is sufficient to establish ``quasi-independence on average'' (cf.\  \cite{bdv,bvbc,bv} as well as Lemma \ref{chung_erd} below). The key for this is to control the measure of the overlaps $S_m \cap S_n$ for $m \neq n$. In Rudnick's paper \cite{rud} this is done by a direct application of $L^2$ methods, which essentially gives the overlap estimate
\begin{eqnarray}  \label{cont}
\sum_{m,n \leq N} \lambda(S_m \cap S_n) \ll \sum_{m,n \leq N} \sqrt{\lambda(S_m) \lambda(S_n)} \frac{\gcd(z_m,z_n)}{\sqrt{z_m z_n}}.
\end{eqnarray}
The sum on the right-hand side of \eqref{cont} is called a GCD sum, and is known to play an important role in metric Diophantine approximation (see \cite[Chapter 3]{hm}). The optimal upper bound for such sums was recently obtained by de la Bret\`eche and Tenenbaum in \cite{dlBT}. In our setting the bound for the GCD sum gives an extra factor $\exp \left(C \sqrt{\log N \log_3 N} / \sqrt{\log_2 N} \right)$, where $C > 0$ is an appropriate absolute constant. Inserting that in \eqref{cont} gives the upper bound
\begin{equation} \label{cont2}
\sum_{m,n \leq N} \lambda(S_m \cap S_n) \ll  \exp \left(C \sqrt{\log N \log_3 N} / \sqrt{\log_2 N} \right)  \sum_{n \leq N} \lambda(S_n). 
\end{equation}
To ensure the quasi-independence property noted above we need
$$
\sum_{m,n\leq N} \lambda(S_m \cap S_n) \ll \sum_{m,n \leq N} \lambda(S_m) \lambda(S_n) = \left( \sum_{n \leq N} \lambda(S_n) \right)^2,
$$
which by \eqref{cont} and \eqref{cont2} can be reformulated as saying that $\sum_{n \leq N} \lambda(S_n)$ needs to exceed $\exp \left(C \sqrt{\log N \log_3 N} / \sqrt{\log_2 N} \right)$. This explains where the extra factor in Rudnick's theorem comes from (which he writes in the less precise form $N^\ve$). We stress the fact that when following such a direct $L^2$ approach this extra factor is essentially optimal.

\subsubsection{Proof of the first upper bound of Theorem \ref{th1}}

To reduce the size of the overlaps $S_m \cap S_n$, one can replace $S_m$ and $S_n$ by modified sets which preserve most of the measure of the original sets, but remove those parts which are excessively responsible for the overlaps. This is  the strategy which led to the co-prime setup in the Duffin--Schaeffer conjecture. Adapting this idea to our situation consider the reduced sets
\begin{equation} \label{red_sets}
S_n^{\textup{coprime}} = [0,1] \cap \left( \bigcup_{\substack{0 \leq a \leq z_n,\\\gcd(a,z_n) = 1}} \left( \frac{a}{z_n} - \frac{\psi(n)}{z_n}, \frac{a}{z_n} + \frac{\psi(n)}{z_n} \right) \right).
\end{equation}
This reflects the fact that overlaps $S_m \cap S_n$ overwhelmingly come from intervals in $S_m$ and $S_n$ which are centered around points $a/z_m$ and $b/z_n$, respectively, for which $a/z_m = b/z_n$, so that either $\gcd(a,z_m) >1$ or $\gcd(b,z_n)>1$. Applying the Koukoulopoulos--Maynard theorem, we deduce that for almost all $\alpha$ we have  $$ 
 \| z_n  \alpha\| \leq \psi(n) \qquad \text{for infinitely many $n$}.
$$ as long as we can ensure that
$$
\sum_{n=1}^\infty \lambda(S_n^{\textup{coprime}}) = \infty. 
$$
Note that $\lambda(S_n^{\textup{coprime}}) = \lambda(S_n) \varphi(z_n)/z_n$, where $\varphi$ is Euler's totient function. It is known that $\varphi(z_n)/z_n \gg (\log_2 z_n)^{-1}$, so we lose a factor of size up to $\log_2 z_n$. Note that this is a function of $z_n$ (i.e.\ depending on the actual size of the \emph{elements} of the difference set $A_N - A_N$), and not a function of $n$ (i.e.\ the cardinality of the difference set). Trivially the largest element of $A_N - A_N$ is at most $a_N$, so that $z_{C_N} \leq a_N$. Setting $\psi(n)=\frac{\log_2 z_n}{n\log n \log_2 n}$ for all $n\geq 1$ and restating the result in terms of $C_N$, this argument leads to the first upper bound claimed in Theorem \ref{th1}: for almost all $\alpha$ we have
$$
\delta_{\min}^{\alpha} (N) \leq \frac{\log_2 a_N}{C_N \log N \log_2 N} \qquad \text{for infinitely many $N$}.
$$

If we want to obtain a result which is independent of the size of $a_N$, we cannot fully reduce to the co-prime setting and suffer a loss of measure that is quantified in terms of $\varphi(a_N)$. Instead, in contrast to \eqref{red_sets} we will only remove those sub-intervals from $S_n$ which are centered at $a/z_n$ for some $a$ and $z_n$ sharing a \emph{small} joint prime factor; this will ensure that we have a loss of measure which is quantified in terms of $N$ (and not in terms of $a_N$). At the same time we need to ensure that we remove sufficiently many sub-intervals from $S_n$ so that the large overlaps causing the appearance of the GCD sum can be significantly reduced. In this way we keep a large proportion of the measure of $S_n$, but remove most of the problematic overlaps.

\subsection{Overlap estimates and auxiliary lemmas} \label{sub_over}

After these heuristics we come to the actual proof. Let $(z_n)_{n \geq 1}$ be defined as in the previous section. Throughout this section we assume that we only consider $m,n$ in a range $(2^{k/2},2^k]$ for some positive integer $k$ (where for simplicity of writing we assume that $k$ is even). We set 
$$
\psi(n) = \frac{1}{2n+1}, \quad n \geq 1,
$$
and
$$
S_n^* = [0,1] \cap \left( \bigcup_{\substack{0 \leq a \leq z_n,\\ p | (a,z_n)  \Rightarrow p > 4^k}} \left( \frac{a}{z_n} - \frac{\psi(n)}{z_n},  \frac{a}{z_n} + \frac{\psi(n)}{z_n} \right) \right).
$$
It can be easily checked that the upper bound of Theorem \ref{th1} follows if we can show that almost all $\alpha$ are contained in infinitely many sets $S_n^*$. We state several lemmas, and then give the proofs of those for which no reference is given.

\begin{lemma} \label{lemma_skr_size}
For all $n \in (2^{k/2},2^k]$
$$
\lambda(S_n^*) \geq \frac{1}{n} \frac{1}{e^{\gamma} \log (4^k)} (1+o(1))
$$
as $k \to \infty$.
\end{lemma}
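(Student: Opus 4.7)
The plan is to write $\lambda(S_n^*)$ as a sum of disjoint interval measures, count admissible numerators via an Euler-totient identity, and apply Mertens' third theorem.

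First, disjointness: for distinct $a, a' \in \{0, 1, \ldots, z_n\}$, the centers $a/z_n$ and $a'/z_n$ differ by at least $1/z_n$, whereas each constituent interval has diameter $2\psi(n)/z_n = 2/((2n+1)z_n) < 1/z_n$. Thus the admissible intervals that comprise $S_n^*$ are pairwise disjoint, and every admissible $a \in \{1, \ldots, z_n - 1\}$ contributes a full subinterval of $[0,1]$ of length $2\psi(n)/z_n$.

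Second, counting: set $P_n := \prod_{p \mid z_n,\, p \le 4^k} p$, a squarefree divisor of $z_n$. The admissibility condition ``$p \mid \gcd(a, z_n) \Rightarrow p > 4^k$'' is equivalent to $\gcd(a, P_n) = 1$, and since $P_n \mid z_n$ the number of admissible $a \in \{1, \ldots, z_n\}$ is exactly $z_n \varphi(P_n)/P_n$. Because every factor $(1-1/p)$ is less than $1$, Mertens' third theorem gives
\[
\frac{\varphi(P_n)}{P_n} \;=\; \prod_{\substack{p \mid z_n \\ p \le 4^k}} \Bigl(1 - \tfrac{1}{p}\Bigr) \;\ge\; \prod_{p \le 4^k} \Bigl(1 - \tfrac{1}{p}\Bigr) \;=\; \frac{e^{-\gamma}}{\log(4^k)}\bigl(1 + o(1)\bigr)
\]
as $k \to \infty$, uniformly in $n$.

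To combine, I would split according to whether $P_n > 1$ or $P_n = 1$. In the former case, $a = z_n$ is not admissible (as $\gcd(z_n, z_n) = z_n$ has a prime factor $\le 4^k$), so all $z_n \varphi(P_n)/P_n$ admissible values of $a$ lie in $\{1, \ldots, z_n - 1\}$ and contribute full intervals, giving
\[
\lambda(S_n^*) \;\ge\; \frac{z_n\varphi(P_n)}{P_n}\cdot\frac{2\psi(n)}{z_n} \;=\; 2\psi(n)\,\frac{\varphi(P_n)}{P_n}.
\]
In the latter case every $a$ is admissible and a direct count yields $\lambda(S_n^*) \ge 2\psi(n)(1 - O(1/z_n))$, which is vastly larger than the target. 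Inserting the Mertens bound together with $2\psi(n) = (1/n)(1+o(1))$, valid uniformly for $n > 2^{k/2}$, yields the claimed estimate.

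The main obstacle is essentially none: this is a routine calculation. The only mild subtlety is to check that the $o(1)$ terms are uniform in $n$ over the range $(2^{k/2}, 2^k]$, which holds because the Mertens factor depends only on the cutoff $4^k$, while the correction $2\psi(n) = (1/n)(1 + O(1/n))$ is uniformly $O(2^{-k/2})$ in this range.
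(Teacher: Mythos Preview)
Your proof is correct and follows essentially the same approach as the paper: count the admissible numerators, bound $\prod_{p\mid z_n,\,p\le 4^k}(1-1/p)$ below by the full product $\prod_{p\le 4^k}(1-1/p)$, and invoke Mertens' third theorem. You are simply more careful than the paper about disjointness, the endpoint intervals at $a=0$ and $a=z_n$, and the uniformity of the $o(1)$ term; the paper compresses all of this into a three-line computation.
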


\begin{lemma}[Pollington--Vaughan style overlap estimate]\label{lemmapv}
Let $m,n \in (2^{k/2},2^k]$ with $m \neq n$. Set
\begin{equation} \label{ddef}
D(z_m,z_n) = \frac{\max(z_m \psi(n), z_n \psi(m))}{\gcd(z_m, z_n)}.
\end{equation}
Furthermore, when $D(z_m,z_n) \geq 1$, then set
\begin{equation} \label{pdef}
P(z_m,z_n) = \prod_{\substack{p | \frac{z_m z_n}{(z_m, z_n)^2},\\ D(z_m,z_n) < p \leq 4^k}} \left(1 + \frac{1}{p} \right),
\end{equation}
where the product ranges over all primes $p$ in the specified range. When $D(z_m,z_n) <1$, then set $P(z_m,z_n) =0$. Then
\begin{equation} \label{lemmaint}
\lambda(S_m^* \cap S_n^*) \ll \frac{\sqrt{\psi(m)\psi(n)}}{4^k} + P(z_m,z_n) \lambda(S_m^*) \lambda(S_n^*) .
\end{equation}
\end{lemma}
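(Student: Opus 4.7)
The plan is to follow the classical Pollington--Vaughan overlap argument, adapted to the partial coprimality condition "$p \mid (a, z_n) \Rightarrow p > 4^k$" that defines $S_n^*$ in place of full coprimality of $a$ and $z_n$. First I would write $S_m^* \cap S_n^*$ as the union over admissible pairs of numerators $(a, b)$ of the interval overlaps
\[
\bigl(\tfrac{a}{z_m} - \tfrac{\psi(m)}{z_m},\, \tfrac{a}{z_m} + \tfrac{\psi(m)}{z_m}\bigr) \cap \bigl(\tfrac{b}{z_n} - \tfrac{\psi(n)}{z_n},\, \tfrac{b}{z_n} + \tfrac{\psi(n)}{z_n}\bigr),
\]
and, setting $d = \gcd(z_m, z_n)$, $z_m = d m'$, $z_n = d n'$, reduce the overlap condition to $|j| \leq \psi(m) n' + \psi(n) m' \leq 2 D(z_m, z_n)$ for the integer $j := a n' - b m'$, while the corresponding overlap length is at most $(\psi(m) n' + \psi(n) m' - |j|)_+ / (d m' n')$.

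In the main regime $D \geq 1$, I would parameterize the overlapping pairs by $j$ with $|j| \leq 2D$. Since $\gcd(m', n') = 1$, for each such $j$ the solutions of $a n' - b m' = j$ form an arithmetic progression with step $(m', n')$, giving $O(d)$ pairs in $[0, z_m] \times [0, z_n]$. The key step is then to sieve these pairs against the coprimality constraint at primes $p \leq 4^k$ dividing $z_m$ or $z_n$. For primes $p \mid m' n'$ with $p \leq D$ one can shift $j$ through a full residue system modulo $p$ inside $[-2D, 2D]$, so the local condition averages out to a factor $(1 - 1/p)$ matching the corresponding factor that appears in $\lambda(S_m^*) \lambda(S_n^*)$ via Lemma \ref{lemma_skr_size}; for primes $p \mid m' n'$ with $D < p \leq 4^k$ this averaging is impossible and each such prime costs an extra factor $(1 + 1/p)$ relative to the product of measures, and these multiply to exactly $P(z_m, z_n)$. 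Combining with the per-overlap measure and summing over $|j| \leq 2D$ yields the main term $P(z_m, z_n) \lambda(S_m^*) \lambda(S_n^*)$.

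For the complementary regime $D < 1$ we have $P(z_m, z_n) = 0$ and must absorb the entire intersection into $\sqrt{\psi(m)\psi(n)}/4^k$. Here $j \in \{-1, 0, 1\}$. For $j = 0$ the pairs take the form $(a, b) = (c m', c n')$, and the coprimality condition forces every prime factor of $m'$ and of $n'$ to exceed $4^k$, which (using $m', n' \leq 2^{k+1}$) either rules out all pairs or forces $m' n' \gg 4^{2k}$, whereupon the full-interval overlap summed over the $O(d)$ values of $c$ gives the claimed bound. For $j = \pm 1$ the identity $a n' - b m' = \pm 1$ automatically forces $\gcd(a, m') = \gcd(b, n') = 1$, so the non-trivial coprimality constraint is only at primes $p \mid d$ with $p \leq 4^k$; a sieve over those primes, combined with the reduced per-overlap length $(2D - 1)_+/(d m' n')$, completes the estimate.

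The hard part will be the sieve bookkeeping in the $D \geq 1$ case: the coprimality loss at primes $p \leq D$ must cancel essentially against the corresponding Euler factors present in $\lambda(S_m^*) \lambda(S_n^*)$, rather than merely being dominated by them, so that what remains is the clean product $P(z_m, z_n)$ over the primes $D < p \leq 4^k$. Achieving this cleanly requires a Brun-type sieve applied uniformly in $(m, n)$, together with the precise relationship between $\lambda(S_n^*)$ and the Euler product $\prod_{p \mid z_n,\, p \leq 4^k} (1 - 1/p)$ that underlies Lemma \ref{lemma_skr_size}.
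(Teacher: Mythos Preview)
Your plan is essentially the Pollington--Vaughan argument that the paper also invokes, and the core of it (parameterize overlapping pairs by $j = a n' - b m'$, sieve the admissible $j$ against primes $p \le 4^k$ dividing $m'n'$, and match the resulting Euler factors $(1-1/p)$ for $p\le D$ against those implicit in $\lambda(S_m^*)\lambda(S_n^*)$) is exactly right. In fact your insistence that the sieve factors must \emph{cancel} against the Euler products in $\lambda(S_m^*)\lambda(S_n^*)$, leaving only the surplus product $P(z_m,z_n)$ over primes in $(D,4^k]$, is precisely the point needed to obtain the main term $P\,\lambda(S_m^*)\lambda(S_n^*)$ rather than merely $P\,\psi(m)\psi(n)$; the paper's sketch compresses this into the phrase ``following Pollington and Vaughan's proof verbatim''. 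One organizational difference: the paper splits according to $j=0$ versus $j\neq 0$ (so that the sieve/$P$-term handles \emph{all} nonzero $j$ uniformly, and only the diagonal $j=0$ yields the extra term $\sqrt{\psi(m)\psi(n)}/4^k$), whereas you split on $D\ge 1$ versus $D<1$. The paper's split is cleaner here, because your subcase $D<1$, $j=\pm 1$ does not obviously fit under $\sqrt{\psi(m)\psi(n)}/4^k$; it is much more natural to absorb it into the $j\neq 0$ sieve bound.

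There is one concrete error in your $j=0$ analysis: you claim $m', n' \le 2^{k+1}$, but $m' = z_m/\gcd(z_m,z_n)$ and $n' = z_n/\gcd(z_m,z_n)$ carry no a priori bound in terms of $k$, since the $z_n$'s are arbitrary positive differences. The salvage is what the paper does: once you have (correctly) observed that admissibility forces every prime factor of $m'$ and of $n'$ to exceed $4^k$, each of $m',n'$ is either $1$ or $>4^k$; since $z_m\neq z_n$ at least one of them exceeds $4^k$, whence $\gcd(z_m,z_n)\le \min(z_m,z_n)/4^k$ (or, in the divisibility case $m'=1$, one argues directly with $\psi(n)/n'$). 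From there the bound $\ll \sqrt{\psi(m)\psi(n)}/4^k$ for the total $j=0$ contribution follows as in the paper.
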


\begin{lemma}[{Chung--Erd\H os inequality; see e.g.\ \cite[Theorem 1.4.3d]{ch}}] \label{chung_erd}
Let $\mathcal{A}_m,~1 \leq m \leq M$, be events in a probability space $(\Omega,\mathcal{F},\mathbb{P})$, such that $\mathbb{P} (\mathcal{A}_m)>0$ for at least one value of $m$. Then
$$
\mathbb{P} \left( \bigcup_{m=1}^M \mathcal{A}_m \right) \geq \frac{\left( \sum_{m=1}^M \mathbb{P}(\mathcal{A}_m) \right)^2}{\sum_{1 \leq m,n \leq M} \mathbb{P} (\mathcal{A}_m \cap \mathcal{A}_n)}.
$$
\end{lemma}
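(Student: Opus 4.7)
The plan is to reduce the lower bound on $\mathbb{P}\!\left(\bigcup_m \mathcal{A}_m\right)$ to a straightforward second-moment estimate via Cauchy--Schwarz on indicator functions. Introduce the counting random variable
$$
X = \sum_{m=1}^M \mathbbm{1}_{\mathcal{A}_m},
$$
so that $X \geq 0$ and the event $\bigcup_{m=1}^M \mathcal{A}_m$ is precisely $\{X > 0\}$. By linearity of expectation, $\mathbb{E}[X] = \sum_{m=1}^M \mathbb{P}(\mathcal{A}_m)$, and an expansion of $X^2$ yields $\mathbb{E}[X^2] = \sum_{1 \leq m,n \leq M} \mathbb{P}(\mathcal{A}_m \cap \mathcal{A}_n)$. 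The assumption that $\mathbb{P}(\mathcal{A}_m) > 0$ for at least one $m$ guarantees $\mathbb{E}[X] > 0$, so both sides of the desired inequality are well-defined and the right-hand side is positive.

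The key step is the observation that $X = X \cdot \mathbbm{1}_{X > 0}$ pointwise, since $X$ vanishes on $\{X = 0\}$. Cauchy--Schwarz then gives
$$
\mathbb{E}[X]^2 = \mathbb{E}[X \cdot \mathbbm{1}_{X > 0}]^2 \leq \mathbb{E}[X^2] \cdot \mathbb{E}[\mathbbm{1}_{X > 0}^2] = \mathbb{E}[X^2] \cdot \mathbb{P}(X > 0).
$$
Rearranging and substituting the identifications for $\mathbb{E}[X]$, $\mathbb{E}[X^2]$, and $\{X > 0\}$ yields exactly the Chung--Erd\H os inequality as stated.

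There is essentially no obstacle here: the only subtlety is the (mild) positivity hypothesis, which is needed solely to ensure that division by $\mathbb{E}[X^2]$ is legitimate and that the stated inequality is not of the form $\geq 0/0$. The proof is entirely self-contained and does not rely on any of the metric number-theoretic machinery developed elsewhere in the paper; in particular no independence or quasi-independence hypotheses on the events $\mathcal{A}_m$ are invoked, which is precisely what makes the lemma the correct tool for the upper-bound portion of Theorem \ref{th1}, where the sets $S_n^*$ will be far from independent and only controlled overlap estimates (via Lemma \ref{lemmapv}) are available.
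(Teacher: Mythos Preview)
Your proof is correct and is the standard second-moment/Cauchy--Schwarz argument for the Chung--Erd\H os inequality. The paper itself does not prove this lemma at all: it is stated with a citation to \cite[Theorem 1.4.3d]{ch}, and the surrounding text explicitly says that proofs are given only for those lemmas for which no reference is provided. Your argument is precisely the textbook derivation one would find behind that citation.
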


\begin{lemma}[{Cassels's zero-one law \cite{cassels}}] \label{cassels}
Let $(\xi(m))_{m \geq 1}$ be non-negative real numbers. For $m \geq 1$ set $\mathcal{A}_m = \bigcup_{0 \leq a \leq m}\left(\frac{a}{m} - \frac{\xi(m)}{m}, \frac{a}{m} + \frac{\xi(m)}{m} \right)$. Let $\mathcal{A}$ be the set of those $\alpha \in  [0,1]$ which are contained in infinitely many sets $\mathcal{A}_m$. Then the Lebesgue measure of $\mathcal{A}$ is either 0 or 1.
\end{lemma}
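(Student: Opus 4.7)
The plan is to derive Lemma \ref{cassels} directly from Lemma \ref{lemma_cat} (Catlin's conjecture, established in \cite{km}), which is already at our disposal in the paper. Concretely, I would apply Lemma \ref{lemma_cat} with $\psi = \xi$ and identify the resulting set $\mathcal{B}$ with the set $\mathcal{A}$ in the statement.

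For this identification, note that for each fixed $k$ there are at most $k+1$ values of $a$ with $0 \le a \le k$, so the condition ``there are infinitely many pairs $(a,k)$ with $|\alpha - a/k| \le \psi(k)/k$'' is equivalent to ``there are infinitely many $k$ for which some such pair exists''; and the latter event is exactly the condition that $\alpha$ lies in the closed-interval analogue of $\mathcal{A}_k$. The only discrepancy with the open intervals appearing in the statement of Lemma \ref{cassels} comes from the countable, hence Lebesgue-null, collection of boundary points $\{a/k \pm \xi(k)/k\}$, which does not affect measure. Consequently $\lambda(\mathcal{A}) = \lambda(\mathcal{B})$, and Lemma \ref{lemma_cat} asserts that this value is either $0$ or $1$, according to whether the series $\sum_{k} \varphi(k)\sup_{b \ge 1}\{\psi(bk)/(bk)\}$ converges or diverges. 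Either way the dichotomy claimed in Lemma \ref{cassels} is immediate.

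Since Lemma \ref{lemma_cat} is already in force in the paper, there is no genuine obstacle in this approach; the argument is a two-line deduction. I remark for context that Cassels's zero-one law historically predates the Koukoulopoulos--Maynard theorem and admits a self-contained proof exploiting the $1/m$-periodicity of each $\mathcal{A}_m$: assuming $\lambda(\mathcal{A}) > 0$, one picks a Lebesgue density point $\alpha_0$ and transfers the high local density near $\alpha_0$ to an arbitrary point of $[0,1]$ by selecting, at each scale $1/m$ appearing in the $\limsup$, a translation $k/m$ approximating the desired shift. The main subtlety in that independent route is that the sets $\mathcal{A}_m$ have different periods and $\mathcal{A}$ itself is not translation-invariant under any fixed rational, so the transfer has to be performed layer by layer inside the $\limsup$ rather than on $\mathcal{A}$ directly. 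In the present context, however, the short derivation from Lemma \ref{lemma_cat} is decisively simpler and is the route I would take.
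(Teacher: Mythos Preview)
Your deduction is correct: with $\psi=\xi$, the set $\mathcal{B}$ of Lemma~\ref{lemma_cat} coincides with the $\limsup$ set $\mathcal{A}$ up to a countable (hence null) collection of interval endpoints, and Lemma~\ref{lemma_cat} then forces $\lambda(\mathcal{A})\in\{0,1\}$ regardless of which alternative in the convergence/divergence dichotomy holds. The identification of ``infinitely many pairs $(a,k)$'' with ``infinitely many $k$'' is handled correctly.

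The paper, however, does not prove Lemma~\ref{cassels} at all; it simply quotes it from Cassels's 1950 paper \cite{cassels} and uses it as a black box. So there is no proof in the paper to compare against. Your route is logically sound within the paper's framework and is indeed a two-line derivation, but note (as you yourself remark) that it is historically anachronistic: you are invoking the full strength of the Koukoulopoulos--Maynard theorem, via Catlin's conjecture, to recover a much older and far more elementary result. The classical density-point argument you sketch is self-contained and does not rely on any deep input; that is presumably why the paper is content to cite \cite{cassels} rather than deduce the lemma from Lemma~\ref{lemma_cat}. Either route is acceptable here, but if one wanted a presentation independent of \cite{km} the classical proof would be required.
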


Finally, we need a lemma due to Koukoulopoulos and Maynard \cite{km}. This is the key ingredient in their recent proof of the Duffin--Schaeffer conjecture. 

\begin{lemma}[{\cite[Proposition 5.4]{km}}] \label{kmlemma}
Let $(\eta(q))_{q \geq 1}$ be a sequence of real numbers in $[0,1/2]$.  Set $M(q,r) = \max\{q \eta(r), ~r \eta(q)\}$. Assume that there are $X < Y$ such that
\begin{equation} \label{assume_that}
\frac{1}{16} \leq \sum_{X \leq q \leq Y} \frac{\eta(q) \varphi(q)}{q} \leq 1/2.
\end{equation}
For $t \geq 1$, set 
$$
L_t(q,r) = \sum_{\substack{p \mid qr/\gcd(q,r)^2 \\ p \geq t }} \frac{1}{p}
$$
and
$$
\mathcal{E}_t = \left\{ (q,r) \in (\mathbb{Z} \cap [X,Y))^2:~\gcd(q,r) \geq t^{-1} M(q,r), L_t(q,r) \geq 10 \right\}.
$$
Then
\begin{equation} \label{impl_const}
\sum_{(q,r) \in \mathcal{E}_t} \frac{\eta(q) \varphi(q)}{q} \frac{\eta(r) \varphi(r)}{r} \ll \frac{1}{t}.
\end{equation}
\end{lemma}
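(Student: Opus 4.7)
The statement is Proposition~5.4 of \cite{km}, the central technical input to the Koukoulopoulos--Maynard proof of the Duffin--Schaeffer conjecture, and I would follow their \emph{GCD graph} framework. The plan is to encode the pairs $(q,r) \in \mathcal{E}_t$ as the edges of a weighted graph $G$ on vertex set $\mathbb{Z} \cap [X,Y)$ with vertex mass $\nu(q) = \eta(q)\varphi(q)/q$, so that the quantity to bound is exactly the total edge mass $\mu(G) = \sum_{(q,r)\in\mathcal{E}_t}\nu(q)\nu(r)$. The normalization hypothesis \eqref{assume_that} reads $\sum_q \nu(q) \le 1/2$, and the goal is to show $\mu(G)\ll 1/t$.

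First I would pigeonhole dyadically on $\gcd(q,r)$, on the sizes of $q$ and $r$, and on the values of $\eta(q)$ and $\eta(r)$, at the cost of a factor $(\log t)^{O(1)}$ (ultimately absorbed into the iteration budget). On each resulting ``regular'' subgraph, write $q = d a$, $r = d b$ with $\gcd(a,b)=1$ and $d \asymp \gcd(q,r)$. The GCD condition $\gcd(q,r) \geq M(q,r)/t$ then becomes an upper bound of the shape $\max(a,b) \ll t\,\min(1/\eta(q), 1/\eta(r))$, while $L_t(q,r)\ge 10$ translates into the uniform statement $\sum_{p\mid ab,\, p\ge t} 1/p \ge 10$ on this subgraph.

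The heart of the argument is the iterative pruning procedure of \cite{km}. At each step one extracts a prime $p_i \ge t$ that divides $ab$ (equivalently, divides exactly one of $q/d,\, r/d$) for a $\nu$-weighted proportion $\Theta(1/p_i)$ of the remaining edges, and restricts to that subgraph: the total edge mass drops by a factor $\Theta(1/p_i)$, but the primes $p_1,p_2,\dots$ accumulate into a rigid configuration that every remaining $a$ or $b$ must be divisible by. Because $L_t \ge 10$ provides a large reservoir $\sum 1/p_i$, the extraction can be iterated many times, and a stopping-time argument produces a quasi-regular subgraph whose vertices are forced to be divisible by a large product of primes exceeding $t$. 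The identity $\varphi(q)/q = \prod_{p\mid q}(1 - 1/p)$ combined with Mertens-style anatomy-of-integers estimates then shows that the total $\nu$-mass of such a vertex set is at most $O(1/t)$, which upon unravelling the pigeonholing yields the claimed bound on $\mu(G)$.

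The main obstacle is the bookkeeping in the iteration: one has to set up the right notion of ``regular GCD graph'' so that each restriction step preserves enough regularity to feed into the next extraction, and one has to balance the multiplicative mass loss at each step against the anatomy-of-integers estimate in a way that produces the sharp factor $1/t$, rather than a weaker exponential bound in $L_t$. The explicit constant $10$ in the hypothesis $L_t(q,r)\ge 10$ reflects precisely this balancing: it is large enough that after $O(1)$ pruning rounds the anatomy bound already dominates the accumulated mass loss, unless the initial edge mass was $O(1/t)$ to begin with.
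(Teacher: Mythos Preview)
The paper does not prove this lemma at all: it is quoted verbatim as Proposition~5.4 of \cite{km}, and the only justification supplied is the one-line remark that the hypothesis $1/16 \le \sum \eta(q)\varphi(q)/q \le 1/2$ is equivalent (up to the implied constant in \eqref{impl_const}) to the normalisation $1 \le \sum \eta(q)\varphi(q)/q \le 2$ used in \cite{km}. So there is nothing to compare your argument to on the paper's side; the authors simply import the result as a black box.

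What you have written is a reasonable bird's-eye summary of the Koukoulopoulos--Maynard GCD-graph / compression argument, but it is a roadmap rather than a proof: the actual argument in \cite{km} occupies roughly thirty pages, and the substance lies precisely in the ``bookkeeping in the iteration'' that you flag as the main obstacle but do not carry out. In particular, the notion of a quality function on GCD graphs, the precise compression step that trades edge-mass loss against structural gain, and the quantitative form of the anatomy estimate are all essential and none of them are reproduced here. For the purposes of the present paper this is fine, since the paper itself makes no attempt to reprove the lemma; but if your intention was to give a self-contained proof, the proposal as it stands is only an outline and would need the full machinery of \cite{km} to be made rigorous.
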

In \cite{km} the lemma is formulated with the numbers $1$ and $2$ in the lower and upper bound, respectively, in equation \eqref{assume_that}, rather than $1/16$ and $1/2$. However, it is easily seen that the statements are equivalent (only the value of the implied constant in \eqref{impl_const} changes).\\

We need to prove Lemmas \ref{lemma_skr_size} and \ref{lemmapv}.

\begin{proof}[Proof of Lemma \ref{lemma_skr_size}]
Using $\psi(n) = 1 /(2n+1)$, we have 
\begin{eqnarray*}
\lambda (S_n^*) & = & \frac{2}{z_n (2n+1)} \# \left\{ 0 \leq a \leq z_n:~p | (a,z_n)  \Rightarrow p > 4^k \right\} \\
& \geq & \frac{2}{z_n (2n+1)} z_n \prod_{p \leq 4^k} \left( 1 - \frac{1}{p} \right) \\
& = & \frac{1}{n} \frac{1}{e^{\gamma} \log 4^k} (1+o(1)),
\end{eqnarray*}
where we used Mertens's third theorem (see for example \cite[Theorem 429]{hw}) to estimate the product over primes. 
\end{proof}


\begin{proof}[Proof of Lemma  \ref{lemmapv}]
The lemma is a variation of the Pollington--Vaughan overlap estimate from \cite{pv}. The Lebesgue measure of $S_m^* \cap S_n^*$ is bounded above by 
$$
\min \left( \frac{\psi(m)}{z_m}, \frac{\psi(n)}{z_n} \right)  \sum_{\substack{1 \leq a \leq z_m,\\p | (a,z_m)  \Rightarrow p > 4^k}} \sum_{\substack{1 \leq b \leq z_n,\\p | (b,z_n)  \Rightarrow p > 4^k}} \mathbbm{1} \left(\left| \frac{a}{z_m} - \frac{b}{z_n} \right| \leq \max \left( \frac{\psi(m)}{z_m}, \frac{\psi(n)}{z_n} \right) \right).
$$
The contribution of those pairs $(a,b)$ for which $a/z_m \neq b/z_n$ can be estimated by following Pollington and Vaughan's proof verbatim, just taking into account the fact that we only sifted out primes of size below $4^k$. Very briefly, based on an application of Brun's sieve one estimates this overlap by
\begin{align}\label{overlap_1}
 &\min \left( \frac{\psi(m)}{z_m}, \frac{\psi(n)}{z_n} \right) \underbrace{\sum_{\substack{1 \leq a \leq z_m,\\p | (a,z_m)  \Rightarrow p > 4^k}} \sum_{\substack{1 \leq b \leq z_n,\\p | (b,z_n)  \Rightarrow p > 4^k}}}_{a/z_m \neq b/z_n} \mathbbm{1} \left(\left| \frac{a}{z_m} - \frac{b}{z_n} \right| \leq 2 \max \left( \frac{\psi(m)}{z_m}, \frac{\psi(n)}{z_n} \right) \right) \nonumber\\
&\ll \min \left( \frac{\psi(m)}{z_m}, \frac{\psi(n)}{z_n} \right) \gcd(z_m,z_n) \sum_{\substack{0 < j \leq 2 D(z_m,z_n),\\p | j ~\Rightarrow ~p > 4^k}} 1 \nonumber\\
 &\ll \min \left( \frac{\psi(m)}{z_m}, \frac{\psi(n)}{z_n} \right) \gcd(z_m,z_n) D(z_m,z_n) P(z_m,z_n) \nonumber\\
  &\ll \frac{\psi(m)}{z_m} \frac{\psi(n)}{z_n}P(z_m,z_n),
\end{align}
where we used the representation
$$
D(z_m,z_n) = \max \left( \frac{\psi(m)}{z_m}, \frac{\psi(n)}{z_n} \right)  \frac{z_m z_n}{\gcd(z_m, z_n)}
$$
to write
$$
\min \left( \frac{\psi(m)}{z_m}, \frac{\psi(n)}{z_n} \right) \gcd(z_m,z_n) D(z_m,z_n) = \frac{\psi(m)}{z_m} \frac{\psi(n)}{z_n}.
$$
In the setting of Pollington and Vaughan (that is, the Duffin--Schaeffer setting) there is no contribution to the overlap $S_m^* \cap S_n^*$ from pairs $(a,b)$ with $a/z_m = b/z_n$, due to the complete co-primality condition. In our setting these overlaps contribute 
\begin{eqnarray}
& \ll & \min \left( \frac{\psi(m)}{z_m}, \frac{\psi(n)}{z_n} \right)  \sum_{\substack{0 \leq a \leq z_m,\\p | (a,z_m)  \Rightarrow p > 4^k}} \sum_{\substack{0 \leq b \leq z_n,\\p | (b,z_n)  \Rightarrow p > 4^k}} \mathbbm{1} \left( \frac{a}{z_m} = \frac{b}{z_n} \right).  \label{line_est}
\end{eqnarray}
We have $a/z_m = b/z_n$ whenever
$$
a = j \frac{z_m}{\gcd(z_m,z_n)} \quad \text{and} \quad b = j \frac{z_n}{\gcd(z_m,z_n)}
$$
for some $j$ in $0 \leq j \leq \gcd(z_m,z_n)$. The divisibility requirements $p | (a,z_m)  \Rightarrow p > 4^k$ and $p | (b,z_n)  \Rightarrow p > 4^k$ imply that for all $z_m$ and $z_n$ for which there exists a $p \leq 4^k$ such that the $p$-adic valuation of $z_m$ is different from that of $z_n$, there is no admissible value of $j$, and the double sum above is empty. Since $z_m$ and $z_n$ need to be different numbers, we can only get a non-vanishing contribution if there is a prime $p > 4^k$ such that the $p$-adic valuation of $z_m$ is different from the one of $z_n$. In this case, we necessarily have $\gcd(z_m,z_n) \leq \min(z_m,z_n) / 4^k$. Thus we can estimate \eqref{line_est} by 
\begin{eqnarray*}
& \ll & \min \left( \frac{\psi(m)}{z_m}, \frac{\psi(n)}{z_n} \right) \frac{\min(z_m,z_n)}{4^k} \ll \frac{\sqrt{\psi(m) \psi(n)}}{\sqrt{z_m z_n}} \frac{\sqrt{z_m z_n}}{4^k} \ll \frac{\sqrt{\psi(m) \psi(n)}}{4^k}.
\end{eqnarray*}
In combination with \eqref{overlap_1}, that proves the lemma. 
\end{proof}

\subsection{Proof of Theorem \ref{th1}, upper bound independent of the size of $a_N$.} \label{sub_th}

Assume that $k$ is fixed and ``large'' (and, for simplicity of writing, that it is even). We decompose all numbers $z_n$ in the form 
$$
z_n = z_n^{\textup{small}} \cdot z_n^{\textup{large}}, \qquad 2^{k/2} < n \leq 2^k,
$$
where $z_n^{\textup{small}}$ has only prime factors of size at most $4^k$, and $z_n^{\textup{large}}$ only has prime factors of size larger than $4^k$. Write 
$$
\{b_1, \dots, b_H\} = \left\{z_n^{\textup{large}}, ~2^{k/2} < n  \leq 2^{k} \right\},
$$
for some appropriate $H$, where we assume that $b_1, \dots, b_H$ are sorted in increasing order. Clearly $H \leq 2^{k} - 2^{k/2}$, but $H$ might actually be smaller since we could have $z_m^{\textup{large}}  =  z_n^{\textup{large}}$ for some $m \neq n$. For every $n \in (2^{k/2},2^k]$ we now define
$$
y_n = z_n^{\textup{small}} p^{(k)}_{h}, 
$$
where $h$ is the uniquely defined index for which $z_n^{\textup{large}} = b_h$, and where $p^{(k)}_1$ denotes the smallest prime exceeding $4^k$, $p^{(k)}_2$ denotes the second-smallest prime exceeding $4^k$, and so on. Then for sufficiently large $k$ we have $p_H^{(k)} \leq k^2 4^k$ by a coarse application of the prime number theorem. The point in the construction of the numbers $(y_n)_{2^{k/2} < n \leq 2^k}$ is that on the one hand in Lemma \ref{lemmapv} we can replace $P(z_m,z_n)$ and $D(z_m,z_n)$ by $P(y_m,y_n)$ and $D(y_m,y_n)$ in the relevant situations, since the small prime factors of $z_m$ and of $z_n$ are the same as those of $y_m$ and $y_n$, respectively; see below for details. On the other hand we have
$$
\prod_{p | y_n} \left( 1 - \frac{1}{p} \right)  \leq \prod_{\substack{p | z_n,\\p \leq 4^k}} \left( 1 - \frac{1}{p} \right)  \leq \left(1 - \frac{1}{4^k} \right)^{-1}\prod_{p | y_n} \left( 1 - \frac{1}{p} \right), 
$$
where we used that by construction the ``small'' prime factors of $z_n$ and $y_n$ coincide, and $y_n$ has one additional prime factor which exceeds $4^k$. Consequently
\begin{equation}\label{relation}
\frac{2 \psi(n) \varphi(y_n)}{y_n} \leq \lambda(S_n^*) \leq \frac{2 \psi(n) \varphi(y_n)}{y_n} \left(1 - \frac{1}{4^k} \right)^{-1}.
\end{equation}
Thus we can control the size of the Euler totient function of $y_n$ (while we cannot control it for $z_n$, which might have many large prime factors). This will allow us a straightforward application of Lemma \ref{kmlemma}. By Lemma \ref{chung_erd} we have 
\begin{equation} \label{quot}
\lambda \left( \bigcup_{2^{k/2} < n \leq 2^k} S_n^* \right) \geq \frac{\left( \sum_{2^{k/2} < n  \leq 2^k} \lambda (S_n^*) \right)^2}{\sum_{2^{k/2} < m,n \leq 2^k} \lambda (S_m^* \cap S_n^*)}.
\end{equation}
By Lemma \ref{lemma_skr_size} we have
\begin{equation} \label{sum_meas1}
\sum_{2^{k/2} < n  \leq 2^k} \lambda (S_n^*) \geq \sum_{2^{k/2} < n  \leq 2^k}  \frac{1}{n} \frac{1}{e^{\gamma} \log 4^k} (1+o(1)) \geq 0.14
\end{equation}
for sufficiently large $k$. On the other hand we can assume without loss of generality that
\begin{equation} \label{sum_meas2}
\sum_{2^{k/2} < n  \leq 2^k} \lambda (S_n^*) \leq 0.99
\end{equation}
(if the sum of measures is even larger, we can just delete some of the sets $S_n^*$). Thus we can control the size of the numerator on the right-hand side of \eqref{quot}. To estimate the denominator of the right-hand side of \eqref{quot}, by Lemma \ref{lemmapv} we have
\begin{equation} \label{comb3}
\lambda(S_m^* \cap S_n^*) \ll \frac{\sqrt{\psi(m)\psi(n)}}{4^k} + P(z_m,z_n) \lambda(S_m^*) \lambda(S_n^*).
\end{equation}
Trivially
\begin{equation} \label{comb1}
\sum_{2^{k/2} < m,n \leq 2^k} \frac{\sqrt{\psi(m)\psi(n)}}{4^k} \ll 1.
\end{equation}
Note that whenever $z_m^{\textup{large}} \neq z_n^{\textup{large}}$, then we have $\gcd(z_m,z_n) \leq 4^{-k} \min\{z_m,z_n\}$ and so 
$$
D(z_m,z_n) = \frac{\max(z_m \psi(n), z_n \psi(m))}{\gcd(z_m,z_n)} \geq \frac{4^k \max(z_m, z_n)}{2^k \min(z_m,z_n)} \geq 2^k,
$$
so that in this case
$$
P(z_m,z_n) \ll \prod_{2^k \leq p \leq 4^k} \left(1 + \frac{1}{p} \right) \ll 1.
$$
On the other hand, if $z_m^{\textup{large}} = z_n^{\textup{large}}$ then it is easily seen that $D(y_m,y_n) = D(z_m,z_n)$. In the next displayed formula, all sums are taken over $m,n$ in the range $2^{k/2} < m,n \leq 2^k$. Using \eqref{relation} we can estimate 
\begin{eqnarray}
& & \sum_{m,n} P(z_m,z_n) \lambda(S_m^*) \lambda(S_n^*) \nonumber\\
& \ll & \sum_{\substack{m,n: \\ z_m^{\textup{large}} \neq z_n^{\textup{large}}}} \lambda(S_m^*) \lambda(S_n^*)   + \sum_{\substack{m,n: \\ z_m^{\textup{large}} = z_n^{\textup{large}}}} \lambda(S_m^*) \lambda(S_n^*) \prod_{\substack{p | \frac{z_m z_n}{(z_m, z_n)^2},\\ D(z_m,z_n) < p \leq 4^k}} \left(1 + \frac{1}{p} \right) \nonumber\\
& \ll & \sum_{m,n} \lambda(S_m^*) \lambda(S_n^*) + \sum_{m,n} \frac{\psi(m) \varphi(y_m)}{y_m} \frac{\psi(n) \varphi(y_n)}{y_n}   \prod_{\substack{p | \frac{y_m y_n}{(y_m, y_n)^2},\\ D(y_m,y_n) < p \leq 4^k}} \left(1 + \frac{1}{p} \right). \label{comb2}
\end{eqnarray}
We apply Lemma \ref{kmlemma} with
$$
\eta(q) = \psi(q) \qquad \text{for all $q \in \left\{y_n:~ 2^{k/2} < n \leq 2^k \right\}$}
$$
and $\eta(q) = 0$ otherwise, and with $X$ and $Y$ defined as the minimum and maximum, respectively, of the set $\left\{y_n:~ 2^{k/2} < n \leq 2^k \right\}$. Then by \eqref{quot}, \eqref{sum_meas1} and \eqref{sum_meas2} we have
\begin{eqnarray} \label{sum_of_meas}
\sum_{X \leq q \leq Y} \frac{\eta(q) \varphi(q)}{q} & = & \sum_{2^{k/2} < n \leq 2^k} \frac{\psi(n) \varphi(y_n)}{y_n} \in [1/16, 1/2]
\end{eqnarray}
for sufficiently large $k$. Thus an application of Lemma \ref{kmlemma} gives \eqref{impl_const}. Translating \eqref{impl_const} into our situation we obtain
\begin{eqnarray*}
& & \sum_{2^{k/2} < m,n \leq 2^k} \frac{\psi(m) \varphi(y_m)}{y_m} \frac{\psi(n) \varphi(y_n)}{y_n}   \prod_{\substack{p | \frac{y_m y_n}{(y_m, y_n)^2},\\D(y_m,y_n) < p \leq 4^k }} \left(1 + \frac{1}{p} \right) \\
& \ll & \sum_{t=0}^{k} \sum_{\substack{2^{k/2} < m,n \leq 2^k,\\D(y_m,y_n) \in [4^t, 4^{t+1})}} \frac{\psi(m) \varphi(y_m)}{y_m} \frac{\psi(n) \varphi(y_n)}{y_n}   \prod_{\substack{p | \frac{y_m y_n}{(y_m, y_n)^2},\\ p > 4^t}} \left(1 + \frac{1}{p} \right) \\
& \ll & \sum_{t=0}^{k} \sum_{\substack{2^{k/2} < m,n \leq 2^k,\\D(y_m,y_n) \leq 4^{t+1}}} \frac{\psi(m) \varphi(y_m)}{y_m} \frac{\psi(n) \varphi(y_n)}{y_n}   \prod_{\substack{p | \frac{y_m y_n}{(y_m, y_n)^2},\\ p > 4^{t+1}}} \left(1 + \frac{1}{p} \right) \\
& \ll & \sum_{t=0}^{k} \sum_{2^{k/2} < m,n \leq 2^k} \frac{\psi(m) \varphi(y_m)}{y_m} \frac{\psi(n) \varphi(y_n)}{y_n}  \# \mathcal{E}_{4^{t+1}} \\
& \ll & \sum_{2^{k/2} < m,n \leq 2^k} \frac{\psi(m) \varphi(y_m)}{y_m} \frac{\psi(n) \varphi(y_n)}{y_n}  \sum_{t=0}^{k}  4^{-t} \\
& \ll & \sum_{2^{k/2} < m,n \leq 2^k} \frac{\psi(m) \varphi(y_m)}{y_m} \frac{\psi(n) \varphi(y_n)}{y_n} \\
& \ll & 1
\end{eqnarray*}
by \eqref{sum_of_meas}. Together with \eqref{comb3}, \eqref{comb1} and \eqref{comb2} this implies
$$
\sum_{2^{n/2} \leq m,n \leq 2^n} \lambda(S_m^* \cap S_n^*) \ll 1.
$$
Thus by \eqref{quot} and \eqref{sum_meas1}, 
$$
\lambda \left( \bigcup_{2^{k/2} < n \leq 2^k} S_n^* \right) \gg 1 
$$
for all sufficiently large $k$, where the implied constant is independent of $k$. Since $k$ can be chosen arbitrarily large, this implies
$$
\lambda \left( \bigcap_{\ell=1}^\infty \left( \bigcup_{n=\ell}^\infty S_n^* \right) \right) > 0,
$$
and since $S_n^* \subset S_n$, we clearly also have
$$
\lambda \left( \bigcap_{\ell=1}^\infty \left( \bigcup_{n=\ell}^\infty S_n \right) \right) > 0.
$$
Thus the measure of the limsup set is positive, which by Cassels's zero-one law (Lemma \ref{cassels}) implies that it actually equals 1. In other words, almost all $\alpha \in [0,1]$ are contained in infinitely many sets $S_n$. Thus for almost all $\alpha$ there are infinitely many $n$ such that
$$
\|z_n \alpha\| \leq \frac{1}{2 n+1}.
$$
This can be rephrased as saying that for almost all $\alpha$ there are infinitely many $m \neq n$ such that $m,n \leq N$ and
$$
\|(a_m - a_n) \alpha \| \leq \frac{1}{2 C_{N-1}+1} \leq \frac{1}{C_{N}},
$$
where for the last inequality we used the general estimate $2 C_{N-1} + 1 \geq C_{N-1} + N - 1 \geq C_N$. This can finally be rewritten as
\begin{equation}
\delta_{\min}^{\alpha} (N) \leq \frac{1}{C_N} \qquad \text{for infinitely many $N$}
\end{equation}
for almost all $\alpha$, as desired.

\subsection{Comments on the proof}
In conclusion, we comment on the problems that arise with the methods used here when attempting to prove an upper bound that holds for all except finitely many $N$, rather than for infinitely many $N$. The key principle in the argument above (as in the proof of the Duffin--Schaeffer conjecture) is to establish quasi-independence on average, which allows one to conclude that the measure of the union set is positive and uniformly bounded away from zero. The step from positive measure to full measure is then taken with the help of the ``abstract'' zero-one law due to Cassels, which relies on ergodic phenomena. This zero-one law is perfectly suited for an ``infinitely many $N$'' result, but cannot be used to deduce an ``all except finitely many $N$'' result. To obtain a result of the latter form, we would need to directly establish that the measure of the union set is not only positive, but actually that the union set has (almost) full measure. Thus instead of ``quasi-independence on average'', where we are allowed to lose constant (uniformly bounded) factors, we would need to establish ``independence on average'' with the correct exact asymptotics instead of the loss of constant factors. This might in principle be doable, but would require establishing suitably adapted versions of all the key tools used during the proof, and in particular would require an adaption of Lemma \ref{kmlemma} and its long and difficult proof.
In the next section, we explain how to modify Rudnick's $L^2$ approach from \cite{rud} to obtain such a result with a weaker upper bound than we expect to be true.

\section{Proof of Theorem \ref{t:allN}} \label{s:pf_allN}
Following Rudnick \cite{rud} we define, for $N \in \mathbb{Z}_{\ge 1}, M > 0$ and $\alpha \in [0, 1]$,
\[
D(N, M)(\alpha) = \sum_{n = 1}^{C_N} \chi_{[- \frac 1{2M}, \frac 1{2M}]}(\alpha z_n),
\] 
where $z_1, \ldots, z_{C_N}$ are the differences in $A_N - A_N$.
Expanding the characteristic function into a Fourier series, we readily obtain that the expected value of $D(N, M)$ is given by
\[ 
\int_0^1 D(N, M)(\alpha) d\alpha = \frac{C_N}{M}.
\]
For the variance, similar arguments as in Rudnick's proof of \cite[Proposition 3]{rud} yield
\[ 
\mathrm{Var}(D(N, M)) \ll \frac 1M \sum_{m,n \le C_N} \frac{\gcd(z_m, z_n)}{\sqrt{z_m z_n}}.
\]
Thanks to the bounds on GCD sums recalled in Section \ref{sub_out}, we obtain that for every $\ve > 0$,
\[
\mathrm{Var}(D(N, M)) \ll \frac{C_N}{M} N^\ve
\]
(where as in Rudnick's paper that $N^\ve$ can be made more precise).

It now follows from the standard argument via Chebyshev's inequality and the Borel--Cantelli lemma (see also \cite[Corollary 5 and its proof]{rud} for details) that we may take $M$ up to $\approx \frac{C_N}{N^\ve}$ along with the observation (see also \cite[Corollary 6 and its proof]{rud}) that this almost surely produces a gap of size at most $\frac 1{2M} \approx \frac {N^\ve}{C_N}$, which implies our claim.

\section{Proof of Theorem \ref{examples}}

 We note that in this specific case we have $C_N \asymp N \log N$, where the symbol $\asymp$ means that $\ll$ as well as $\gg$ hold. The upper bound $C_N \ll N\log N$ is clear from the prime number theorem, while the lower bound $C_N \gg N \log N$ follows for instance from $E_N \ll N^3/\log N$ (see for example \cite[Lemma 4]{HH}) together with \eqref{C_N_E_N}. The ``convergence'' part of Theorem \ref{examples} is immediate from the cardinality estimate of the difference set and the first Borel--Cantelli lemma. For the ``divergence'' part, we argue as follows. We can construct a sequence $(z_n)_{n \geq 1}$ of distinct integers, such that for all $N$ we have 
\begin{equation*}
\{z_n:~1 \leq n \leq C_N \} \subset (A_N - A_N)^+,
\end{equation*}
such that we also have $z_n \gg n$, and such that $(z_n)_{n \geq 1}$ is strictly increasing. All of those properties are made possible by $C_N \asymp N \log N$, which implies that the difference set $(A_N - A_N)^+$ has a relative density within the maximal possible set $(\{1, \dots, p_N\} - \{1, \dots, p_N\})^+$ which is uniformly bounded away from zero.

Thus we have a sequence of ``admissible'' differences $(z_n)_{n \geq 1}$ which has positive lower density within $\mathbb{N}$. Morally speaking, this allows us to further restrict to a sub-sequence for which the Euler totient function is uniformly bounded, and thus avoid any loss in the application of the reduction to the co-prime (Duffin--Schaeffer) setup. More formally, we fix a small constant $\ve>0$, and for $m \geq 1$ we define 
$$
\eta^*(m) = \left\{ \begin{array}{ll} \ve \eta(m) & \text{if $m \in (z_n)_{n \geq 1}$}, \\0 & \text{otherwise,} \end{array} \right.
$$
where $\eta$ comes from the assumptions of the theorem. Then by construction $\eta^*(m)$ is non-increasing on a set of positive lower asymptotic density. We apply \cite[Corollary $3$]{hm} (a result called the Duffin--Schaeffer theorem) and conclude that for almost all $\alpha$ there are infinitely many solutions $m$ of $\|m \alpha\| \leq \eta^*(m)$. Restating the conclusion in terms of our original sequence, this means that for almost all $\alpha$ there are infinitely many $N$ together with $m,n \leq N$ such that
$$
\|(a_m - a_n) \alpha\| \leq \ve \eta(C_N).
$$
The assumptions on $\eta$ together with $C_N \gg N \log N$ ensure that $\eta(C_N) \ll \eta(N \log N)$, so that finally (by choosing $\ve$ sufficiently small) we can conclude that for almost all $\alpha$ there are infinitely many solutions to
$$
\|(a_m - a_n) \alpha\| \leq \eta(N\log N),
$$
as claimed.\\

\begin{rmk}
We note that for this particular example one easily obtains a result of the ``for all except finitely many $N$'' type that is not far from the conjecturally optimal upper bound \eqref{conj_up}. Indeed, the fact that $C_N$ grows very slowly means that the trivial bound $\delta_{\min}^{\alpha}(N) \leq 1/N$ (which holds for all $\alpha$ and all $N$) is only a $\log N$ factor away from \eqref{conj_up}. Since for this example the difference set is particularly dense (a subset of $\mathbb{N}$ of positive density), it is quite possible that methods from metric Diophantine approximation such as those of Schmidt \cite{schmidt} can be adapted to give even stronger results. However, we have not found a result in the literature which could be directly applied to the problem discussed in this section.
\end{rmk}

\section{Proof of Theorem \ref{th_quad}} \label{s:pf_squares}

Since the proof follows the same path as the proof of Theorem \ref{th1}, we only sketch the differences.  The main new ingredient is the observation that the difference set of an initial segment of the squares  is very closely related to the set of integers arising from the Erd\H os multiplication table problem. Indeed, assume that $q=ab$ where $a,b \leq N$ and $a,b$ are of the same parity, then setting $m=\frac{a+b}{2}$ and $n=\frac{a-b}{2}$ we have $q=(m-n)(m+n)=m^2-n^2$ where $m,n \leq N$. It is easy to see that this process can be reversed, and any such difference of squares gives rise to a product of two numbers $a,b$ of the same parity. Consequently it follows from  Ford's asymptotic results on the multiplication table problem \cite[Corollary 3]{ford} that for $a_n=n^2,~n \geq 1$, we have \begin{equation}\label{multsquares} C_N \asymp N^2 (\log N)^{-c} (\log_2 N)^{-3/2}.\end{equation} The lower bound in \eqref{multsquares} follows from a restriction to the elements of the multiplication table $(2a)(2b)$ with $a,b \leq N/2$. The lower bound in Theorem \ref{th_quad} then follows, as usual, from this estimate for the cardinality of the difference set together with the first Borel--Cantelli lemma. 

For the upper bound of Theorem \ref{th_quad} we define the sets $S_n^{\textup{coprime}}$ as in \eqref{red_sets} and apply the Koukoulopoulos--Maynard theorem with 
 $$\psi(n)=\frac{1}{ n \log n \log_2 n} .$$  
 Then for almost all $\alpha$ we have $$ 
 \| z_n  \alpha\| \leq \psi(n) \qquad \text{for infinitely many $n$}
$$ as long as we can ensure that
$$
\sum_n \lambda(S_n^{\textup{coprime}}) = \infty. 
$$
Recall that $\lambda(S_n^{\textup{coprime}}) = \lambda(S_n) \varphi(z_n)/z_n$. The divergence of the series follows from the following simple estimate
\begin{equation}\label{divseries} \sum_{n\leq N} \frac{1}{n\log n \log_2 n} \frac{\varphi(z_n)}{z_n} \gg \sum_{n\leq N} \frac{1}{n\log n \log_2 n \log_3 n}.\end{equation} For the proof of \eqref{divseries} we will utilize \eqref{multsquares} which implies that the difference set of an initial segment of the squares has a relatively large density within the maximal possible set. A lemma due to Koukoulopoulos and Maynard \cite[Lemma $7.3$]{km} states that for all $x,t\ge1$, we have
	\[\	\#\left\{n\le x:~ \sum_{p|n,p\ge t}\frac{1}{p} \ge  1 \right\} \ll x e^{- t},
	\]
where the implied constant is absolute. Choosing $t=2 \log_2 N$ and noting that $\prod_{p \leq \log_2 N} (1-1/p) \leq (1+o(1)) e^{-\gamma} \log_3 N$ by Mertens's third theorem, we deduce that 
\begin{equation*}
\# \left\{n\le N: \varphi(n)/n \leq 2/\log_3 n \right\} \ll \frac{N}{(\log N)^2},
\end{equation*} 
from which \eqref{divseries} follows. Altogether, this implies that for almost all $\alpha$ we have 
$$
\delta_{\min}^{\alpha} (N) \leq \frac{(\log N)^{c-1} (\log_2 N)^{1/2}}{ N^2}  \qquad \text{for infinitely many $N$}.
$$

\begin{rmks}
\begin{enumerate}[(i)] \mbox{}
\item We note that a similar proof is possible if $a_n = n^2$ is replaced by $a_n = a n^2 + b n + c$ for some fixed integers $a,b,c$. The lower bound follows again directly from Ford's results and the first Borel--Cantelli lemma. For the upper bound, one would need a variant of Ford's results under certain (fixed) congruence restrictions (generalizing the parity considerations in the argument above). Such a restricted version of the multiplication table problem is not explicitly addressed in Ford's paper, but can probably be obtained by a simple modification of his methods.

\item We also note that the same fact which gave us good control of the factor coming from the Euler totient function (namely that the difference set is relatively dense within its maximal possible range) also allows us a good control of the GCD sum, and thus better error terms in the $L^2$ method which leads to a result of ``for all except finitely many $N$'' type. To be more specific, it is known that 
\begin{equation} \label{gcd_sum_mn}
\sum_{m,n=1}^N \frac{\gcd(m,n)}{\sqrt{m,n}} \ll N \log N. 
\end{equation}
This follows for example from \cite[Theorem 4.4]{brou} together with a form of the Cauchy--Schwarz inequality (cf.\ also \cite{toth}). Note that this is much better than the worst-case upper bound for a GCD sum for a general set of cardinality $N$, as described in Section \ref{sub_out} above and used in Section \ref{s:pf_allN}. Plugging that estimate for the GCD sum into the argument outlined in Section \ref{s:pf_allN}, one obtains that for almost all $\alpha$,
$$
\delta_{\min}^{\alpha}(N) \leq \frac{(\log N)^{2 + c + \ve}}{C_N}  \qquad \text{for all except finitely many $N$},
$$
or, formulated in terms of $N$ rather than $C_N$,  
$$
\delta_{\min}^{\alpha}(N) \leq \frac{(\log N)^{2 + 2c + \ve}}{N^2}  \qquad \text{for all except finitely many $N$}.
$$
This is better than Regavim's bound $(\log N)^{4 + \ve}/N^2$ in \cite{reg}, but misses the conjecturally optimal bound \eqref{conj_up} by a factor of $(\log N)^{2+c}$. The factor $(\log N)^{2+c}$ could probably be reduced with a bit more effort rather than using \eqref{gcd_sum_mn} right away. However there is a limitation to the possible improvements in \eqref{gcd_sum_mn} as shown in \cite[Theorem $1.1$]{smallGCD}. To conclude, some further ideas would be necessary to reach \eqref{conj_up}.
\end{enumerate}
\end{rmks}

\section{Proof of Theorem \ref{th2}}

We begin with the first conclusion of Theorem \ref{th2}, that is, with equation \eqref{th2_b}. We follow \cite[Proof of Theorem 1.3]{bbrr}. We restrict ourselves to $\alpha \in [1,2]$; for other intervals of length 1 the proof works in exactly the same way. Let $N$ be given, and write $X = X(N)$. We define 
$$
S_{X,N} = \left\{ \alpha \in [1,2]:~\delta_{\min}^{(\alpha)}(N) < 1/X \right\}
$$
and 
$$
S_X^{(q)} = [1,2] \cap \bigcup_{n,n' \leq \sqrt{N}} \left( \frac{n^2 - n'^2}{q} - \frac{1}{qX}, \frac{n^2 - n'^2}{q} + \frac{1}{qX} \right).
$$
We will require a result due to Ford which is more general than the ``multiplication table'' asymptotics which were mentioned in the previous section, and can only be stated (in a special case) after introducing some further notation.  Let $y<z$, and let $H(x,y,z)$ be the number of positive integers of size at most $x$ which have at least one divisor in the range $(y,z]$. Let $u=u(y,z)$ be defined by $z = y^{1+u}$. Then by \cite[Theorem 1 (v)]{ford}, for all sufficiently large $x,y,z$ such that $y \leq \sqrt{x}$ and $2y \leq z \leq y^2$, we have
\begin{equation} \label{ford}
\frac{H(x,y,z)}{x} \asymp u^c (\log 2/u)^{-3/2},
\end{equation}
where $c = 1-(1+\log_2 2)/(\log 2)$. We first use \eqref{ford} to give an upper bound for the measure of $S_X^{(q)}$. We consider only those values of $q$ in the range $[1,N]$ which can be written in the form $q = m^2 - m'^2 = (m+m')(m-m')$ with $m + m' \leq \sqrt{N}$. If $q \leq N/\log N$, then we can use the trivial estimate
\begin{equation} \label{S_X_meas_triv}
\lambda \left(S_X^{(q)}\right) \leq \frac{2}{X}. 
\end{equation}
Now assume that $q \in (N/\log N, N]$. We split the full range $(N/\log N,N]$ into dyadic intervals $(2^r,2^{r+1}]$. Clearly at most $\log_2 N$ values of $r$ are necessary to do so. Assume that $q \in (2^r,2^{r+1}]$. We want to estimate the number of integers in the range $(q,2q]$ which can be written in the form $n^2 - n'^2 = (n+n')(n-n')$ such that $\sqrt{2^r} \leq n + n' \leq 2N$.  The number of such integers is bounded above by $H(2^{r+2},\sqrt{2^r},2\sqrt{N})$, with $H$ as defined above. We can easily verify that the assumptions made before the statement of \eqref{ford} are satisfied. We have
$$
2\sqrt{N} = (\sqrt{2^r})^{1+u}, \quad \text{which implies} \quad u = \frac{\log (4N / 2^r)}{\log 2^r} \ll \frac{\log_2 N}{\log N},
$$
where we used that $N / 2^r \ll \log N$. Thus by \eqref{ford} the number of such integers is bounded above by 
$$
H(2^{r+2},\sqrt{2^r},2\sqrt{N}) \ll 2^r \left(\frac{\log_2 N}{\log N}\right)^c \left(\frac{1}{\log_2 N}\right)^{3/2} \ll 2^r (\log N)^{-c}(\log_2 N)^{c-3/2},
$$
and so 
\begin{equation} \label{S_X_meas}
\lambda(S_X^{(q)}) \ll \frac{2^r(\log_2 N)^{c-3/2}}{q X (\log N)^{c}} \ll \frac{(\log_2 N)^{c-3/2}}{X (\log N)^c}
\end{equation}
since we assumed that $q \in (2^r,2^{r+1}]$.

We only need to consider $S_X^{(q)}$ for values of $q$ which can be written in the form $q = m^2 - m'^2 = (m+m')(m-m')$ with $m+m' \leq 2\sqrt{N}$. Thus we again invoke Ford's estimate (this time in the classical ``multiplication table'' setup), by which the number of such values of $q$ is bounded above by $\ll N (\log N)^{-c} (\log_2 N)^{-3/2}$. We have
$$
S_{X,N} \subset \bigcup_{q = m^2 - m'^2} S_X^{(q)},
$$
where the union is over all $q = m^2-m'^2$ for which $m+m' \leq 2 \sqrt{N}$. In the next displayed formula we understand that all summations are only taken over integers $q \leq N$ which admit a representation $q = m^2 - m'^2$ as described above. Then by \eqref{S_X_meas_triv} and \eqref{S_X_meas}, we have
\begin{eqnarray*}
\lambda (S_{X,N}) & \ll & \sum_{q} \lambda(S_X^{(q)}) \\
& \ll & \sum_{q \leq N/\log N} \lambda(S_X^{(q)}) + {\sum_{N/\log N < q \leq N}}  \lambda (S_X^{(q)}) \\
& \ll & \frac{N}{\log N} \frac{1}{X} + \frac{N}{(\log N)^{c} (\log_2 N)^{3/2} } \frac{(\log_2 N)^{c-3/2}}{X (\log N)^c} \\
& \ll & \frac{N}{X (\log N)^{2c} (\log_2 N)^{3-c}}.
\end{eqnarray*}
Setting $X = X(N) = N (\log N)^{1-2c}(\log_2 N)^{c-2+\varepsilon}$, we have
$$
\lambda (S_{X,N}) \ll (\log N)^{-1} (\log_2 N)^{-1-\varepsilon}. 
$$
Thus, letting $N$ run along the sequence $2^w$ of powers of $2$, we have
$$
\sum_{w=1}^\infty \lambda (S_{X,2^w}) < \infty,
$$
which implies by the Borel--Cantelli lemma that almost surely only finitely many events $S_{X,2^w}$ happen. In other words, for almost all $\alpha$ 
$$
\delta_{\min}^{(\alpha)}(2^w) > 1/(2^w (\log 2^w)^{1-2c}(\log_2 2^w)^{c-2+\varepsilon})
$$
for all sufficiently large $w$. Clearly $\delta_{\min}^{(\alpha)}(N) \geq \delta_{\min}^{(\alpha)}(2^w)$ for all $N \in [2^{w-1},2^w]$. Also, $N (\log N)^{1-2c} (\log_2 N)^{c-2+\varepsilon}\geq   \frac{2^w}{3} (\log 2^w)^{1-2c}(\log_2 2^w)^{c-2+\varepsilon}$ for  all $N \in [2^{w-1},2^w]$ for sufficiently large $w$. Thus for almost all $\alpha$ 
$$
\delta_{\min}^{(\alpha)}(N) > \frac{(\log N)^{2c}(\log_2 N)^{2-c-\varepsilon}}{3 N \log N} \qquad \text{for all} \quad N \geq N_0(\alpha),
$$
which proves \eqref{th2_b} in a slightly stronger form (note that the exponent of the $\log_2 N$ term is positive). 

Equation \eqref{th2_c} can be established along similar lines. We choose $X(N) = \frac{N(\log_2 N)^{c-3+\varepsilon}}{ (\log N)^{2c}}$. Then by \eqref{S_X_meas_triv} and \eqref{S_X_meas} 
$$
\lambda(S_{X,N}) \ll \frac{N}{\log N} \frac{1}{X} + \frac{N}{X (\log N)^{2c} (\log_2 N)^{3-c}} \ll \frac{1}{(\log_2 N)^{\varepsilon}},
$$
so that $\lambda(S_{X,N}) \to 0$ as $N \to \infty$. Now let 
$$
S:= \left\{ \alpha \in [1,2]:~\delta_{\min}^{(\alpha)}(N) \leq \frac{(\log N)^{2c}(\log_2 N)^{3-c-\varepsilon}}{N} \text{ for all sufficiently large } N \right\}.
$$ Clearly $$S= \liminf_{N \to \infty} S_{X(N),N}$$ and thus $\lambda(S)=0$. This proves \eqref{th2_c}, again in a slightly stronger form. \\

\begin{rmk}
Finally, we note why \eqref{th2_a} and \eqref{th2_d} are more difficult to establish. As usual in metric number theory, the ``divergence'' part based on an application of the second Borel--Cantelli lemma is much more delicate, as it requires some kind of stochastic independence. In contrast to the setup of Theorem \ref{th1}, in Theorem \ref{th2} we are dealing with metric Diophantine approximation where both the numerators and denominators are restricted to coming from a special set. More precisely, we are essentially dealing with the sets
$$
S_X^{(q)} = [1,2] \cap \bigcup_{a \in \mathcal{M}} \left( \frac{a}{q} - \frac{1
}{q X}, \frac{a}{q} + \frac{1}{q X} \right),
$$
where $a$ and $q$ are both restricted to the set $\mathcal{M} = \mathcal{M}(N)$ of integers appearing in the appropriate multiplication table. As we saw in the proof of Lemma \ref{lemmapv}, to handle the overlaps
$$
S_X^{(q)} \cap S_X^{(r)}, 
$$
one is led to counting the number of solutions of a Diophantine inequality
$$
\left| \frac{a}{q} - \frac{b}{r} \right| \leq \frac{\Delta (q,r)}{\textup{lcm}(q,r)}
$$
for some appropriate $\Delta(q,r)$, which is defined by analogy with \eqref{ddef}, under the extra requirement that only solutions $a,b \in \mathcal{M}$ are counted. If the fine arithmetic structure of the set $\mathcal{M}$ is sufficiently ``random'', then one would expect that the number of solutions $(a,b)$ of the equation above has a scaling factor $|\mathcal{M}|^2 / N^2$ which reflects the relative density $\frac{\# \mathcal{M}}{\#\{1, \dots, N\}}$ that arises from the requirement that $a,b \in \mathcal{M}$. If one could establish the fact that the number of solutions (subject to $a,b \in \mathcal{M}$) exhibits the correct scaling (on average, when summing over $q$ and $r$), then this would allow to deduce \eqref{th2_a}. We do not see any reason why the particular set $\mathcal{M}$, arising from the multiplication table problem, should \emph{not} yield the desired scaling factor, but we have not been able to prove anything in this direction. The estimate \eqref{th2_d} seems to be even more delicate, as it does not seem that controlling the pairwise overlaps is sufficient to obtain such a result, but rather the overlaps between more than two sets would need to be controlled. We note in conclusion that there do exist results on metric Diophantine approximation with two restricted variables, most prominently in the work of Harman \cite{har1,har2,har3,har4}. However, his results are not applicable in this setup. His general results \cite{har4} require the admissible set of numerators to have positive upper density (which we do not have in our setting). He also has results without such a density hypothesis, for example for the case when the numerators are restricted to being primes \cite{har3}, but those rely on deep arithmetic properties of the particular set of admissible numerators, and cannot (as far as we can see) be adapted to our setting. 
\end{rmk}

\section*{Acknowledgments}

CA is supported by the Austrian Science Fund (FWF), projects F-5512, I-3466, I-4945, P-34763 and Y-901. DE is supported by FWF projects F-5512, P-34763 and Y-901. MM is supported by FWF project P-33043. The authors are grateful to Shvo Regavim and Zeev Rudnick for several helpful comments.

\end{document}